\numberwithin{equation}{section}
\renewcommand{\@biblabel}[1]{#1\hfill \hspace{-0.2cm}}
\newtheorem{example}{Example}[section]
\newtheorem{definition}{Definition}[section]
\newtheorem{theorem}{Theorem}[section]
\newtheorem{remark}{Remark}[section]
\newtheorem{proposition}{Proposition}[section]
\newtheorem*{maintheorem*}{Main Theorem}
\numberwithin{equation}{section}
\renewcommand{\i}{\ifmmode\mathit{\mathchar"7010 }\else\char"10 \fi}
\renewcommand{\j}{\ifmmode\mathit{\mathchar"7011 }\else\char"11 \fi}
\newcommand{\R}{\mathbb{R}}
\newcommand{\N}{\mathbb{N}}
\newcommand{\Z}{\mathbb{Z}}
\newcommand{\modulo}[1]{\left|#1\right|}
\newcommand{\norm}[1]{\left\|#1\right\|}
\newcommand{\BV}{\mathbf{BV}}
\renewcommand{\L}[1]{\mathbf{L^#1}}
\newcommand{\C}[1]{\mathbf{C^{#1}}}
\begin{document}




\title{A non-local traffic flow model for 1-to-1 junctions with buffer}

\author{%
	Felisia Angela Chiarello\affil{1},
	Jan Friedrich\affil{2}
	and
	Simone G\"ottlich\affil{3,}\corrauth
}

\shortauthors{the Author(s)}

\address{%
	\addr{\affilnum{1}}{DISIM, University of L'Aquila, Via Vetoio Ed. Coppito 1, 67100 L'Aquila, Italy.}
	\addr{\affilnum{2}}{IGPM, RWTH Aachen University, Templergraben 55, 52064 Aachen, Germany}
\addr{\affilnum{3}}{Department of Mathematics, University of Mannheim, B6, 28-29, 68159 Mannheim}}

\corraddr{goettlich@uni-mannheim.de.
}

\begin{abstract}
  \vspace{5pt}
In this paper, we introduce a non-local PDE-ODE traffic model devoted to the description of a 1-to-1 junction with buffer. We present an existence result in the free flow case as well as a numerical method to approximate weak solutions in the general case. In addition, we show a maximum principle which is uniform in the non-local interaction range.
Further, we exploit the limit models as the support of the kernel tends to zero and to infinity.
We compare them with other already existing models for traffic and production flow and present numerical examples.
\bigskip

\noindent\textbf{Keywords:} Scalar conservation laws, anisotropic non-local flux, coupled PDE-ODE model, traffic flow.

\noindent\textbf{2020 Mathematics Subject Classification:} 35L65, 65M08, 76A30 

\end{abstract}
 \maketitle                               
                                
\vspace{0.2cm}
\newlength{\fwidth}
\section{Introduction}
The aim of this paper is to study a non-local traffic flow model for 1-to-1 junctions. The term `non-local' refers to the dependence of the flux function on a convolution term between a kernel function and the velocity function depending on the conserved quantity.
This type of conservation laws are able to describe several phenomena arising in many fields of application and for this reason received growing attention in the recent years.
The first macroscopic traffic flow model based on fluid-dynamics equations has been introduced in the transport literature with the Lighthill, Whitham and Richards (LWR) model \cite{lwr_1, lwr_2}, that consists in one scalar equation expressing  conservation of cars.
Since then, several approaches have been developed during the last years, addressing the need for more sophisticated models to better understand the nonlinear traffic dynamics. 
 `Non-local' versions of the LWR model have been recently proposed in \cite{BlandinGoatin2016, chiarello2018global, friedrich2018godunov}. In this type of models, the flux is assumed to depend on a weighted mean of the downstream traffic density or velocity. 
These kind of non-local traffic models are intended to describe the behaviour of drivers that adapt their velocity with respect to what happens in front of them. For this reason, the flux function  depends on a `downstream' convolution term between the density of vehicles or the velocity, and a kernel function supported on the negative axis, such that drivers only look forward, not backward. 
There are general existence and uniqueness results for non-local equations in \cite{AmorimColomboTexeira, KeimerPflug2017, chiarello2018global, friedrich2018godunov} for scalar equations in one-space dimension, in \cite{ColomboHertyMercier2011, KEIMER2018} for multi-dimensional scalar equations, and in \cite{ACG2015} for the multi-dimensional system case.
There are mainly two different approaches to prove the existence of solutions for these non-local models. One is providing suitable compactness estimates on a sequence of approximate solutions constructed through finite volume schemes, as in \cite{AmorimColomboTexeira, ACG2015}. Another approach relies on the method of characteristics and fixed-point theorems, as proposed in \cite{KeimerPflug2017, KEIMER2018}. 
In \cite{AmorimColomboTexeira}, Kru\v{z}kov-type entropy conditions are used to prove the $\L1-$stability with respect to the initial data through the doubling of variable technique, while in \cite{KeimerPflug2017,KEIMER2018}, the uniqueness of weak solutions is obtained directly from the fixed-point theorem, without prescribing any kind of entropy condition.

One key question which naturally arises in the non-local setting from a mathematical, but also modelling point of view, is the model behavior for the two limits of the non-local interaction range. 
Considering an infinite range, the non-local scalar traffic flow model tends to a linear transport equation \cite{chiarello2018global}.
Contrary, if the range tends to zero, 
non-local scalar traffic flow models converge to the local LWR model under specific assumptions, e.g.~\cite{keimer2019approximation, friedrich2022conservation,coclite2020general,colombo2022nonlocal, bressan2020entropy, bressan2019traffic} or~\cite{ChiarelloKeimer2023} for the singular non-local-to-local limit for weakly-coupled systems.

Some works about non-local traffic models deal with networks, for example~\cite{CamilliDT18, friedrich2020onetoone, ChiarelloCocliteDisc, friedrich2022network, shen2019stationary, shen2019traveling, keimer2018bounded}. 
In~\cite{CamilliDT18}, the authors consider measure valued solutions for non-local transport equations and~\cite{keimer2018bounded} deals with non-local conservation laws on bounded domains while 
\cite{friedrich2020onetoone, ChiarelloCocliteDisc, shen2019traveling, shen2019stationary} include 1-to-1 junctions. In particular in~\cite{friedrich2020onetoone, ChiarelloCocliteDisc}, the existence and well-posedness of solutions at a 1-to-1 junction is shown, where the
roads are allowed to differ in the speed limits and maximum road capacities.
The limit of those models for a non-local range tending to infinity is investigated in \cite{friedrich2022network}.
Here, the models still converge to a linear transport equation which includes a capacity constraint on the flux induced by the coupling of the roads.
In contrast, the limit to zero is only investigated numerically so far and the correct limit model is still an open problem.

In this paper we aim to model a 1-to-1 junction with a buffer, generalizing the `local' model in~\cite{herty2009novel} to the non-local setting. Further, we want to exploit the limit models for the non-local range tending to infinity and zero.
The paper is organized as follows. In Section 2, we introduce a new non-local model for 1-to-1 junctions with buffer. Then, in Section 3 we investigate analytically the existence of weak solutions in the free flow case, i.e. when the buffer is not active on the first road, by introducing the characteristics of the modeling equations.
In Section 4 we perform a numerical discretisation of our model using an upwind-type numerical scheme. In Section 5, the limit model as the length of the support $\eta$ of the kernel function goes to zero is exploited and in Section 6 the opposite limit model as the length of the support $\eta$ of the kernel function goes to infinity is investigated. In Section 7, we give a conclusion and mention open problems. 

\section{Modeling equations}
As aforementioned, we consider a non-local traffic flow model for 1-to-1 junctions that includes a buffer.
Therefore, let us recall first the 1-to-1 junction model from \cite{friedrich2020onetoone}.
This model does not consider a buffer so far and the dynamics for the traffic density on each road are given by
\begin{equation}
\begin{aligned} \label{eq:1to1NObuffer} 
&\partial_t \rho_1(t,x) + \partial_x \left(\rho_1(t,x) V_1(t,x)+\min\{\rho_1 V_2(t,x),\, \rho^2_{\max} V_2(t,x)\}\right)=0,\\
&\partial_t \rho_2(t,x) + \partial_x (\rho_2(t,x) V_2(t,x))=0.
\end{aligned}
\end{equation}
Here, $V_1$ and $V_2$ are the parts of the non-local velocity on each road segment defined by 
\begin{equation*}
    V_1(t,x):=\int_{\min\{x,\,0\}}^{\min\{x+\eta,\,0\}}v_1(\rho_1(t,y))\omega_\eta(y-x)\,dy,
\end{equation*}
\begin{equation*}
    V_2(t,x):=\int_{\max\{x,\,0\}}^{\max\{x+\eta,\,0\}}v_2(\rho_2(t,y))\omega_\eta(y-x)\,dy.
\end{equation*}
In particular, the junction point is placed at $x=0$, $\rho_1$ and $\rho_2$ are the traffic densities, $\eta>0$ is the non-local range and support of the kernel function $\omega_\eta$, which will be detailed below, and $v_1$ and $v_2$ are suitable velocity functions.
The most important feature is the coupling between the two road segments which is simply induced by the minimum operator and the maximum capacity of the second road.
The latter one expresses the supply of the second road.
At the junction itself, the second road can receive a flux up to $\rho^2_{\max} V_2(t,0)$.
Due to non-locality, this capacity restriction must be considered already in the interval $[-\eta,0)$.
From a microscopic point of view, this means that as soon as a driver is aware of an intersection, she/he takes the associated (possible) restrictions into account.

We now extend this model by placing a buffer between the two roads.
These buffers could, for example, model a simplified version (by neglecting the exact geometry) of highway on-ramps or roundabouts.
A buffer has a maximum capacity $\mu$, i.e. the maximum rate at which cars can enter or leave the buffer.
Furthermore, the buffer is full at $r_{\max}$.
This creates a specific supply that the buffer can provide for the flow of the first road.
Similar to the situation without a buffer, the supply must be taken into account as soon as a driver sees the buffer.
As in the situation without buffer, the second road at the junction can take a maximum flow of $\rho^2_{\max} V_2(t,0)$.
This results in the following modeling equations for the buffer $r(t)$ and the traffic densities:
\begin{equation}
\begin{aligned} \label{eq:model} 
&\partial_t \rho_1(t,x) + \partial_x \left(\rho_1(t,x) V_1(t,x)+\min\{\rho_1 V_2(t,x),\, s_B(t,x)\}\right)=0,\\
&\partial_t \rho_2(t,x) + \partial_x (\rho_2(t,x) V_2(t,x))=0,\\
& r'(t)=\min\{s_B(t,0^-),\,\rho_1(t, 0^-) V_2(t,0)\}-\min\{d_B(t),\,\rho^2_{\max} V_2(t,0)\},
\end{aligned}
\end{equation}
where 
\begin{align*}
&s_B(t,x)=\begin{cases}
    \mu \int_0^{\max\{x+\eta,0\}}\omega_\eta(y-x)\, dy, \quad &r(t)\in [0, r_{\max}),\\
    \min\{\rho^2_{\max} V_2(t,x),\, \mu \int_0^{\max\{x+\eta,0\}}\omega_\eta(y-x)\, dy\}, &r(t)= r_{\max},
\end{cases}\\
&d_B(t)=\begin{cases}
        \mu, \quad &r(t)\in (0, r_{\max}],\\
    \min\{\rho_1(t,0^-) V_2(t,0),\,\mu\}, &r(t)= 0.
\end{cases}
\end{align*}
Note that the model \eqref{eq:model} is inspired by its local counterpart introduced in \cite{herty2009novel}.
For a better readability we postpone the detailed discussion of the local variant to Section \ref{subsec:limit0model}.

To close the equations~\eqref{eq:model} we couple them with the initial data
\begin{equation}
\begin{aligned} \label{eq:hyp} 
\rho_1(0,x)=\rho_{1,0}(x) \in \BV((-\infty,0)),\quad
\rho_2(0,x)=\rho_{2,0}(x) \in \BV((0,+\infty)),\quad
r(0)=r_0\geq 0,
\end{aligned}
\end{equation} 
where $\BV([a,b]):=\{f\in \L1([a,b]) |\: TV(f)<+\infty \}.$
For the well-posedness of the model, we assume the following common assumptions, see e.g. \cite{BlandinGoatin2016, chiarello2018global, friedrich2020onetoone, friedrich2022network, friedrich2018godunov}
\begin{itemize}
\item[\bf (H1)] 
	$\quad \omega_{\eta}\in \C1([0,\eta];\R^+),\,\eta>0, \, \omega'_\eta\leq 0,$ 
 $\int_0^{\eta}\omega_\eta(x)dx=1$; 
\item[\bf (H2)] 
	$\quad v_e:\R^+\rightarrow \R^+$, is a smooth non-increasing function such that $v_e(0)=V_e^{\max}>0$ and
	$v_e(\rho)=0$ for $\rho\geq \rho_{\max}^e$ for $e\in\{1,2\}$. 
\end{itemize}
Inspired by~\cite[Definition 2.1]{DelleMonacheGoatin2014} and~\cite[Definition 4.1]{DELLEMONACHE2014}, solutions are intended in the following sense: 
\begin{definition}\label{def:weaksol}
A couple of functions $\rho_e\in C(\R^+, \L1_{loc}((a_e,b_e)))$ with $e\in\{1,2\},$ and $a_1=-\infty, b_1=0$ and $a_2=0, b_2=+\infty,$ is called a weak solution of~\eqref{eq:model}-\eqref{eq:hyp} if 
\begin{enumerate}
    \item for $e=1,\, \rho_e$ is a weak solution on $(-\infty, 0)$  to 
    $$\partial_t \rho_e(t,x) + \partial_x \left(\rho_e(t,x) V_1(t,x)+\min\{\rho_e V_2(t,x),\, s_B(t,x)\}\right)=0; $$
    \item for $e=2,\, \rho_e$ is a weak solution on $(0, +\infty)$  to 
    $$\partial_t \rho_e(t,x) + \partial_x (\rho_e(t,x) V_2(t,x))=0; $$
    \item for all $t>0,$ it holds 
    $$\rho_2(t,0^+) V_2(t,0)=\min\{d_B(t),\,\rho^2_{\max} V_2(t,0)\}; $$
    \item for $e\in\{1,2\}$ $\rho_e\in \L\infty((0,T)\times (a_e,b_e)$, 
    \item $r(t)$ is a Carath\`eodory solution, i.e. for a.e. $t\in\R^+$ $$r(t)=r_0+\int_0^t  \left(\min\{s_B(s,0^-),\,\rho_1(s, 0^-) V_2(s,0)\}-\min\{d_B(s),\,\rho^2_{\max} V_2(s,0)\}\right) \,ds.$$
   \end{enumerate}
\end{definition}

\section{Existence of solutions for free flow}
In this section we exploit the existence of weak solutions in a particular case, i.e. the flow on the first road is not influenced by the buffer and can pass onto the second road if the buffer is empty.
In case of a non-empty buffer, its size is decreasing over time. Even though this is a rather simple example, it represents the case that in the past the buffer filled due to congestion, but now the traffic jams on the roads resolved to allow free flow such that the buffer gets empty.\\
We introduce the characteristic lines of our model, see \cite[Definition 2.5 and Remark 2.6]{keimer2018bounded}.
For a given non-local term $W_1\in \L\infty((0,T);W^{1,\infty}(-\infty,0))$ the characteristics $\xi_{W_1}^1$ for $(t,x)\in(0,T)\times (-\infty,0)$ are given by
\begin{equation}\label{eq:char_1}
\xi_{W_1}^1[t,x](\tau)=x+\int_t^\tau W_1(s,\xi^1_{W_1}[t,x](s))ds,\quad \tau \in[0,T].
\end{equation}
Similar for a given non-local term $W_2\in \L\infty((0,T);W^{1,\infty}(0,\infty))$ the characteristics $\xi_{W_2}^2$ for $(t,x)\in(0,T)\times (0,\infty)$ on the second road are given by
\begin{equation}\label{eq:char_2}
\xi_{W_2}^2[t,x](\tau)=x+\int_t^\tau W_2(s,\xi^2_{W_2}[t,x](s))ds,\quad \tau \in[0,T].
\end{equation}
For $(t_0,x_0)\in\{(t,x)\in(0,T)\times(0,\infty): x\leq \xi_{W_2}^2[0,0](t)\}$ and $x_1\in[0,x_0]$ the time inverted characteristics on the second road are defined 
\[ \xi_{W_2}^2[t_0,x_0]^{-1}_{\max}(x_1)=\max\{\tau\in[0,t_0]:\xi_{W_2}^2[t_0,x_0](\tau)=x_1\},\]
and the non-local terms are
\begin{align*}
W_1[\rho_1,\rho_2](t,x)=\begin{cases}
V_1(t,x)+V_2(t,x),\ &\text{ if }\rho_1(t,x)V_2(t,x)\leq s_B(t,x),\\
V_1(t,x)&\text{ else,}
\end{cases} \quad W_2[\rho_1,\rho_2](t,x)=V_2(t,x).
\end{align*}
Note that $V_1$ and $V_2$ depend on $\rho_1$ and $\rho_2$, respectively.
Moreover, the non-local term on the first road differs from the non-local quantity in~\eqref{eq:model}.
In case the supply $s_B$ is present on the first road, this term is independent of the density and does not influence the characteristic equation.
Nevertheless, it plays a source term role when the solution is written in terms of the characteristics.
This is similar to~\cite[equation (3.1)]{keimer2018bounded}.\\
We can prove the following existence result in the free flow case.
\begin{theorem}\label{thm:ex}
Let $v_e(\rho)=v_{\max}^e(1-\rho/\rho_{\max}^e)$ for $e\in\{1,2\}$, $v_{\max}^2\leq v_{\max}^1$ and $\mu\geq \rho_{\max}^1 v_{\max}^2$.
\begin{enumerate}
\item Let $\rho_{\max}^2\geq \rho_{\max}^1$, then a weak solution in the sense of Definition \ref{def:weaksol} exists.
\item Let $\rho_{\max}^2<\rho_{\max}^1$ and $r_0\in (0, r_{\max})$, then the existence of a weak solution in the sense of Definition \ref{def:weaksol} is guaranteed as long as $r(t)\in (0,r_{\max})$, e.g. for $t\in [0,T)$ with $T=\min\{r_0/(\rho_{\max}^2v_{\max}^2),(r_{\max}-r_0)/(\rho_{\max}^1v_{\max}^2)\}$.
\end{enumerate}
Furthermore, the weak solutions satisfy the following maximum principle:
\begin{align*}
&0\leq \rho_{1}(t,x)\leq \rho^1_{\max}\ \text{ for a.e. }(t,x)\in \R^+\times (-\infty,0)\\
&0\leq \rho_{2}(t,x)\leq \rho^2_{\max}\ \text{ for a.e. }(t,x)\in \R^+\times (0,\infty).
\end{align*}
\end{theorem}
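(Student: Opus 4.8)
The plan is to adapt the method-of-characteristics and Banach fixed-point strategy of \cite{KeimerPflug2017,friedrich2020onetoone}, after first using the hypotheses to reduce \eqref{eq:model} to a coupled pair of non-local transport equations driving the characteristic system \eqref{eq:char_1}--\eqref{eq:char_2}. The key preliminary observation is that, whenever the buffer is not full, $s_B(t,x)=\mu\int_0^{\max\{x+\eta,0\}}\omega_\eta(y-x)\,dy$, and since $v_2\leq v_{\max}^2$ together with the (still to be justified) bound $\rho_1\leq\rho_{\max}^1$ give
\[
\rho_1(t,x)\,V_2(t,x)\leq\rho_{\max}^1 v_{\max}^2\int_0^{\max\{x+\eta,0\}}\omega_\eta(y-x)\,dy\leq s_B(t,x)
\]
precisely because $\mu\geq\rho_{\max}^1 v_{\max}^2$, the minimum in the first equation of \eqref{eq:model} collapses to $\rho_1 V_2$. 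Hence on the first road the transport speed is $W_1=V_1+V_2$, and since $V_1(t,0^-)=0$ the buffer inflow is $\rho_1(t,0^-)V_2(t,0)$. I would also record the uniform bound $0\leq W_1\leq v_{\max}^1$: splitting the convolution at the junction and using $v_{\max}^2\leq v_{\max}^1$ with the normalization in (H1) gives $V_1(t,x)+V_2(t,x)\leq v_{\max}^1$, which guarantees $W_1\in\L\infty((0,T);W^{1,\infty}(-\infty,0))$ so that the characteristics are well defined.

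With the reduction in place, I would set up the fixed-point map $\Phi:(\rho_1,\rho_2,r)\mapsto(\tilde\rho_1,\tilde\rho_2,\tilde r)$ on a ball in a $C([0,T];\L1)$-type space for small $T$: from the input one computes $V_1,V_2,s_B,d_B$, transports the data \eqref{eq:hyp} along \eqref{eq:char_1}--\eqref{eq:char_2} (with the junction flux on the second road prescribed by item (3) of Definition \ref{def:weaksol}), and integrates the buffer ODE to obtain $\tilde r$. Lipschitz continuity of $v_e$ and $\omega_\eta$, the uniform velocity bounds, and Gronwall estimates on the characteristic flows make $\Phi$ a contraction for $T$ small, so Banach's theorem yields a local weak solution; this part is routine.

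The maximum principle is the analytic core and is what makes the reduction legitimate. Representing the solution along characteristics as $\rho_e(\tau,\xi(\tau))=\rho_{e,0}\exp\!\big(-\int\partial_x W_e\,ds\big)$ gives $\rho_e\geq 0$ at once. For the upper bound I would compute, on the first road, $\partial_x W_1=\partial_x V_1+\partial_x V_2$ by integration by parts and exploit (H1): since $\omega_\eta'\leq 0$, the coupling term satisfies $\partial_x V_2(t,x)=v_2(\rho_2(x+\eta))\omega_\eta(\eta)-\int_0^{x+\eta}v_2(\rho_2(y))\omega_\eta'(y-x)\,dy\geq 0$, so it never decreases $\partial_x W_1$; and at any point where $\rho_1=\rho_{\max}^1$ one has $v_1(\rho_1)=0$, which forces $\partial_x V_1\geq 0$ as well. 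Thus along a characteristic reaching the level $\rho_{\max}^1$ the density is non-increasing, so $\rho_{\max}^1$ acts as a barrier and $\rho_1\leq\rho_{\max}^1$; the analogous standard computation for $W_2=V_2$, together with the admissible boundary flux $\min\{d_B,\rho_{\max}^2 V_2(t,0)\}\leq\rho_{\max}^2 V_2(t,0)$, yields $\rho_2\leq\rho_{\max}^2$.

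It remains to close the bootstrap through the buffer ODE. Under the bounds just obtained the inflow equals $\rho_1(t,0^-)V_2(t,0)\leq\rho_{\max}^1 v_{\max}^2\leq\mu$ and the outflow is $\min\{d_B,\rho_{\max}^2 V_2(t,0)\}$. In case (1), $\rho_{\max}^1\leq\rho_{\max}^2$ gives $\rho_{\max}^2 V_2\geq\rho_1 V_2$ and hence outflow $\geq$ inflow, so $r'\leq 0$: the buffer is non-increasing, never fills, and the free-flow reduction---and therefore the solution---persists for all $t$. In case (2) the inflow may exceed the outflow, so I would only use the crude rate bounds $-\rho_{\max}^2 v_{\max}^2\leq r'(t)\leq\rho_{\max}^1 v_{\max}^2$, which keep $r(t)\in(0,r_{\max})$ on $[0,T)$ with $T=\min\{r_0/(\rho_{\max}^2 v_{\max}^2),(r_{\max}-r_0)/(\rho_{\max}^1 v_{\max}^2)\}$, exactly the stated horizon. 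The main obstacle is precisely this self-referential structure: the collapse of the minima presupposes free flow, so the construction is only vindicated a posteriori, once the maximum principle controls $\rho_1 V_2$ by $\mu$ and the buffer estimates confirm that $r$ stays in the admissible range (in case (2), only until it reaches an endpoint of $(0,r_{\max})$).
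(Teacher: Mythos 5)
Your proposal is sound in outline and rests on the same ingredients as the paper's proof: the observation that $\mu\geq\rho_{\max}^1 v_{\max}^2$ together with the maximum principle collapses the minimum in the flux on road 1, the sign structure of $\partial_x V_1$ and $\partial_x V_2$ coming from $\omega_\eta'\leq0$, and the crude rate bounds on $r'$ that produce exactly the stated horizon $T$ in case (2). The organization, however, is genuinely different. You run a Banach fixed point directly on the coupled triple $(\rho_1,\rho_2,r)$ and reprove the invariance of $[0,\rho_{\max}^e]$ along characteristics; the paper instead exploits that in the free-flow regime the coupling is triangular: while $r(t)>0$ the inflow to road 2 is $\min\{\mu,\rho_{\max}^2V_2(t,0)\}$, independent of $\rho_1$, so road 2 is a self-contained initial-boundary value problem covered by the results of \cite{keimer2018bounded}; road 1 is then recast as a single non-local equation with exterior datum $v_1^{-1}(v_2(\rho_2))$ --- this is where the paper uses $v_{\max}^2\leq v_{\max}^1$, whereas you use it only for the bound $W_1\leq v_{\max}^1$ --- and once the buffer empties one falls back on the no-buffer model of \cite{friedrich2020onetoone}. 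The paper's route buys a short proof by citation; yours is self-contained, but the step you call ``routine'' --- the contraction for the fully coupled map, including the trace $\rho_1(t,0^-)$ in the buffer ODE and the discontinuous switch of $d_B$ at $r=0$ and $r=r_{\max}$ --- is precisely what Remark \ref{rem:completemodel} flags as the non-trivial obstacle for the general model. In the free-flow case it is salvageable because the coupling degenerates as just described and because you, like the paper, work only on time intervals where the buffer regime does not switch, but you should make that triangular structure explicit rather than asserting a generic contraction. Your a-posteriori bootstrap for the self-referential collapse of the minima is legitimate and plays the same role as the paper's case distinction on $r_0$.
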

\begin{proof}
The proof is based on existence results from \cite{keimer2018bounded} and \cite{friedrich2020onetoone}.\\
We start with the first case, i.e. empty buffer $r_0=0$. The assumption $\mu\geq \rho_{\max}^1 v_{\max}^2$ guarantees that the buffer stays empty, since $r'(t)=0$ holds for $t\geq 0$.
Furthermore, due to the same assumption the supply function is not present in the flux of the first road.
Hence, the model is given by \eqref{eq:1to1NObuffer} and the existence of a weak solution satisfying the maximum principle is given by \cite[Theorem 1]{friedrich2020onetoone}.\\
Now, let $r_0>0$.
Here, the buffer size is decreasing until it is empty, since for $r(t)>0$
\[r'(t)=\rho_1(t,0-)V_2(t,0)-\min\{\mu,\rho_{\max}^2V_2(t,0)\}<0\]
holds.
In particular, as long as the buffer is positive, the inflow on the second road is given by $\min\{\mu,\rho_{\max}^2V_2(t,0)\}$, independently of $\rho_1$.
Let $T^*$ be the time at which the buffer becomes empty.
One can view the whole problem as two separate problems on semi-infinite intervals.
One initial boundary value problem reads as
\begin{align}\label{eq:surrogatemodel}
\partial_t \rho_2(t,x)+\partial_x (\rho_2(t,x) V_2(t,x))&=0,\quad &(t,x)\in (0,T^*)\times(0,\infty),\\
\rho_2(0,x)&=\rho_{2,0}(x),&x\in(0,\infty),\nonumber \\
V_2(t,0)\rho_2(t,0)&=\min\{\mu,\rho_{\max}^2V_2(t,0)\},&t\in(0,T^*).\nonumber
\end{align}
Due to the linear velocity this model is a specific choice of the modeling framework in \cite{keimer2018bounded}.
Note that due to the choice of the inflow the boundary datum remains well-posed in the case $V_2(t,0)=0$, see \cite[Remark 2.2]{keimer2018bounded}.\\
Hence, existence of weak solutions and the maximum principle are guaranteed by \cite[Corollary 5.4, 5.9 and Remark 5.6]{keimer2018bounded} for the problem \eqref{eq:surrogatemodel}.
We can proceed similarly for the first road by choosing the boundary data appropriately.
Due to the non-locality we need to choose a function which depends on $\rho_2$ and gives the same dynamics as model \eqref{eq:model}.
We can choose the following surrogate model for $(t,x)\in (0,T^*)\times(-\infty,0)$
\begin{align*}
\partial_t \rho_1(t,x)+\partial_x \left(\rho_1(t,x) v_1\left(\int_x^{x+\eta} \omega_\eta(y-x) \left(\begin{cases} \rho_1(t,y)\quad & y\in(-\infty,0)\\ v_1^{-1}(v_2(\rho_2(t,y)) &\text{else}\end{cases}\right)dy\right)\right)&=0.
\end{align*}
Here, we need the assumption $v_{\max}^2\leq v_{\max}^1$ such that the inverse is well defined and the boundary data is non-negative.
Again this fits into the framework of \cite{keimer2018bounded} such that the existence of solutions is guaranteed for $t\in(0,T^*)$.\\
For $t\geq T^*$ the buffer is empty and we are back in the previous case ($r_0=0$) such that existence is given by~\cite{friedrich2020onetoone}.\\
The proof of the second case is similar to the case $r_0>0$ and $t\in(0,T^*)$ discussed above with the slight modification that for $r(t)=r_{\max}$ or $r(t)=0$ the dynamics changes.
Hence, the given time $T=\min\{r_0/(\rho_{\max}^2v_{\max}^2),(r_{\max}-r_0)/(\rho_{\max}^1v_{\max}^2)\}$ is a simple lower bound on $T^*$ that guarantees $r(t)\in(0,r_{\max})$ for $t\in[0,T)$.
\end{proof}
Let us shortly discuss how the results can be extended.
\begin{remark}[Uniqueness]
In Theorem \ref{thm:ex} we only discussed the existence of solutions for the free flow case. 
As long as $r(t)\in(0,r_{\max})$ we also obtain the uniqueness, thanks to \cite{keimer2018bounded}.
In \cite{friedrich2020onetoone} only the uniqueness of a weak entropy solution has been shown.
Nevertheless, an extension of these results to the uniqueness of weak solutions with similar techniques as in \cite{keimer2018bounded,friedrich2022conservation,friedrich2023conservation2} can be expected, which would provide uniqueness of weak solutions for the cases considered in Theorem~\ref{thm:ex}.
\end{remark}
\begin{remark}[Nonlinear velocities]
To obtain the above results for nonlinear velocity functions the uniqueness of weak solutions for the so-called non-local in velocity model~\cite{friedrich2018godunov} on a bounded domain needs to be shown.
This can be achieved by following the lines of \cite{keimer2018bounded} with similar modifications as done in \cite{friedrich2022conservation,friedrich2023conservation2}.
\end{remark}
\begin{remark}[General model]\label{rem:completemodel}
The existence and uniqueness for the complete model~\eqref{eq:model} can be proven by exploiting the characteristics as in~\cite{keimer2018bounded}. In particular, the solution on each interval can be written in terms of the characteristics, which can be plugged into the non-local terms. This gives a fixed-point problem with respect to the two non-local terms.
The difficulty here and the main difference with~\cite{keimer2018bounded} is that we have to deal with two non-local terms which are coupled.
The non-local term on the first road depends on the solution of the second road and the solution on the second road depends on the first one through the outflow of the buffer.
This makes the analysis of~\cite{keimer2018bounded} more complicated.
\end{remark}

\section{Numerical discretisation}
In the previous section we have seen that in specific cases we can guarantee the existence of solutions and a maximum principle. The latter will be now extended to the general case.
Therefore, we will approximate the solutions by using a numerical scheme.
This scheme is based on the numerical schemes introduced in \cite{friedrich2020onetoone,friedrich2018godunov,friedrich2022network,friedrich2023numerical}.

For $j\in \Z$ and $n \in \N$ and $e\in E:=\{1,2\}$, let $x_{e,j-1/2} = j\Delta x$ be the cell interfaces, $x_{e,j} = (j+1/2)\Delta x$ the
cells centers, corresponding to a space step $\Delta x$ such that $\eta = N_\eta \Delta x$ for some $N_\eta \in \N$,
and let $t_n = n\Delta t$ be the time mesh. In particular, $x = x_{e,-1/2} = 0$ is a cell interface. 
Note that we have $j\geq 0$ if $e=2$ and $j<0$ if  $e=1.$
We aim at constructing a finite volume approximate solution $\rho^{\Delta x}_e$ such that $\rho^{\Delta x}_e (t,x) = \rho^n_{e, j}$
for
$(t,x) \in [t^n, t^{n+1}) \times [x_{e,j-1/2}, x_{e,j+1/2})$. To this end, we approximate the initial datum $\rho_0$ with the
cell averages
\begin{equation*}
    \rho_{e, j}^0=\frac{1}{\Delta x} \int_{x_{e,j-1/2}}^{x_{e,j+1/2}} \rho_{e, 0}(x) \,dx, \qquad \begin{cases}
    j< 0, \quad e=1,\\
    j\geq 0, \quad e=2.
    \end{cases}
\end{equation*}

We consider the following numerical fluxes similar to \cite{friedrich2020onetoone,friedrich2022network}
\begin{equation}\label{eq:flux}
    F^n_{e,j}:=\begin{cases}
  \rho^n_{e,j} V^n_{e,j}+\min\left\{ \rho^n_{e,j} V^n_{2,j}, s^n_{B,j}  \right\}, \quad &e=1,\\
   \rho^n_{e,j} V^n_{e,j}, \quad &e=2,
    \end{cases}
\end{equation}
with 
\begin{align*}
  V^{n}_{1,j}&=\sum_{k=0}^{\min\{-j-2,\,N_\eta-1\}} \gamma_k v_1(\rho^n_{1,\,j+k+1}), \quad    V^{n}_{2,j}=\sum_{k=\max\{-j-1,\,0\}}^{N_{\eta-1}} \gamma_k v_2(\rho^n_{2,\,j+k+1}),\\
 s^n_{B,j}&= \begin{cases}
    \mu \displaystyle{\sum_{k=-j-1}^{N_{\eta-1}}}\gamma_{k}, \quad &r^n\in [0, r_{\max}),\\
    \min\left\{\rho^2_{\max} V^n_{2, j},\, \mu \displaystyle{\sum_{k=-j-1}^{N_{\eta-1}}}\gamma_{k}\right\}, &r^n= r_{\max}, 
\end{cases}\quad
d^n_B=\begin{cases}
        \mu, \quad &r^n\in (0, r_{\max}],\\
    \min\{\rho^n_{1,-1} V^n_{2,-1},\,\mu\}, &r^n= 0.
\end{cases}
\end{align*}
where $\gamma_k=\int_{k \Delta x}^{(k+1)\Delta x} \omega_\eta(x)\,dx, \ k=0,...,N_\eta-1$ and $r^n$ is the approximation of the buffer computed by an explicit Euler scheme.
In this way, we define the following finite volume scheme: 
\begin{equation}\label{eq:scheme}
\begin{aligned}
    \rho^{n+1}_{e,j}&= \rho^{n}_{e,j}-\frac{\Delta t}{\Delta x} \left(F^n_{e,j}-F^n_{e,j-1}\right),\\
    r^{n+1}&=r^{n}+\Delta t \left(\min\{s^n_{B,0}, \rho^n_{1,0}V^n_{2,0}\}-\min\{d^n_B, \rho^2_{\max}V^n_{2,0}\}\right),
    \end{aligned}
    \quad n\in \N.
\end{equation}

We now show that the numerical scheme satisfies a reasonable maximum principle on each road.
\begin{proposition}\label{prop:maximum_principle}
Under hypothesis~\eqref{eq:hyp} and the CFL condition \begin{equation} \label{eq:CFL}
    \frac{\Delta t}{\Delta x}\leq \frac{1}{\gamma_0 \norm{v'} \norm{\rho}+2\norm{v}},
\end{equation} 
with $\norm{v}=\max\{\norm{v_1},\norm{v_2}\},\, \norm{v'}=\max\{\norm{v_1'},\norm{v_2'}\}$ and $\norm{\rho}=\max\{\rho^1_{\max},\,\rho^2_{\max}\},$
the sequence generated by the numerical scheme~\eqref{eq:flux}-\eqref{eq:scheme} satisfies the following maximum principle:
$$0\leq \rho^{n}_{e,j}\leq \rho^e_{\max}.$$
\end{proposition}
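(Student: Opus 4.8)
\section*{Proof plan}

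The plan is to argue by induction on the time level $n$: assuming $0\le\rho^n_{e,j}\le\rho^e_{\max}$ for every $j$ and every $e\in\{1,2\}$ (the base case $n=0$ being inherited from the initial data through cell averaging), I would establish the same two bounds for $\rho^{n+1}_{e,j}$. Throughout I set $\lambda=\Delta t/\Delta x$ and record two consequences of the hypotheses that drive the whole argument. First, (H1) gives $\sum_{k=0}^{N_\eta-1}\gamma_k=1$ and, since $\omega_\eta$ is non-increasing, the weights are ordered $\gamma_0\ge\gamma_1\ge\dots\ge\gamma_{N_\eta-1}\ge0$; in particular $0\le V^n_{e,j}\le\norm{v}$. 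Second, (H2) yields the Lipschitz-type estimate $v_e(\rho)\le\norm{v'}(\rho^e_{\max}-\rho)$ for $\rho\in[0,\rho^e_{\max}]$, which is the mechanism making $\rho^e_{\max}$ an absorbing state.

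For the lower bound I would write the update as $\rho^{n+1}_{e,j}=\rho^n_{e,j}-\lambda F^n_{e,j}+\lambda F^n_{e,j-1}$, observe that $F^n_{e,j-1}\ge0$ since all factors are non-negative, and use $F^n_{1,j}\le 2\norm{v}\,\rho^n_{1,j}$ on the first road (the two velocity contributions $V^n_{1,j}$ and $V^n_{2,j}$) and $F^n_{2,j}\le\norm{v}\,\rho^n_{2,j}$ on the second. The coefficient $1-2\lambda\norm{v}$ (respectively $1-\lambda\norm{v}$) is non-negative under~\eqref{eq:CFL}, so $\rho^{n+1}_{e,j}\ge0$.

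The upper bound is the heart of the matter. The key algebraic step is to isolate inside the incoming velocity $V^n_{e,j-1}$ the self-term $\gamma_0 v_e(\rho^n_{e,j})$ and to note that the remaining tail of $V^n_{e,j-1}$ is dominated by $V^n_{e,j}$: after reindexing, the tail carries the same densities as $V^n_{e,j}$ but with the smaller weights $\gamma_{k+1}\le\gamma_k$. On the second road this already closes the estimate, since collecting terms and using $v_2(\rho^n_{2,j})\le\norm{v'}(\rho^2_{\max}-\rho^n_{2,j})$ together with $\rho^n_{2,j-1}\le\norm{\rho}$ reduces $\rho^{n+1}_{2,j}\le\rho^2_{\max}$ to $\lambda(\norm{v}+\gamma_0\norm{v'}\norm{\rho})\le1$, which~\eqref{eq:CFL} ensures. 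On the first road the coupling term $G^n_{1,j}:=\min\{\rho^n_{1,j}V^n_{2,j},\,s^n_{B,j}\}$ forces a case distinction according to which branch is active at cell $j$. If the convolution branch is active, the outgoing flux equals $\rho^n_{1,j}(V^n_{1,j}+V^n_{2,j})$ and I would run the argument with the effective velocity $V^n_{1,j}+V^n_{2,j}$, additionally invoking the spatial monotonicity $V^n_{2,j-1}\le V^n_{2,j}$ (again from $(\gamma_k)$ decreasing) to dominate the incoming coupling term; the factor $2\norm{v}$ in~\eqref{eq:CFL} is precisely what the two velocity components demand. If instead the supply branch is active, I would use $s^n_{B,j-1}\le s^n_{B,j}$ (monotonicity of $s^n_{B,\cdot}$ in $j$, valid in both buffer regimes) so that the net supply contribution $-\lambda s^n_{B,j}+\lambda\,G^n_{1,j-1}\le0$, after which only $V^n_{1,j}$ survives and the estimate closes exactly as on the second road.

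I expect the main obstacle to be precisely this coupling: the scheme is \emph{not} monotone (the cross-dependence through $V$ has no definite sign), so the standard monotone-flux argument is unavailable, and one must instead show directly that the outflow of each cell dominates its inflow. This hinges on the three discrete monotonicities in $j$ --- of $\gamma_k$, of $s^n_{B,j}$, and of $V^n_{2,j}$ --- and on retaining the $\gamma_0 v_e$ self-term so as to exploit the Lipschitz bound at $\rho^e_{\max}$. The remaining care is bookkeeping at the junction cells $j=-1$ and $j=0$, where the convolution ranges truncate; these are simpler and treated separately, since $V^n_{1,-1}=0$ and the prescribed junction inflow to the second road is capped by $\rho^2_{\max}V^n_{2,0}$.
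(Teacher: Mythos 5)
Your plan is correct and follows essentially the same route as the paper's proof: the extraction of the self-term $\gamma_0 v_e(\rho^n_{e,j})$ from $V^n_{e,j-1}$ using the monotonicity of the weights $\gamma_k$, the Lipschitz bound $v_e(\rho)\le\norm{v_e'}(\rho^e_{\max}-\rho)$ coming from $v_e(\rho^e_{\max})=0$, and the monotonicities in $j$ of $s^n_{B,j}$ and of $V^n_{2,j}$ on the first road are exactly the ingredients the paper uses. The only (cosmetic) difference is that you split into cases according to which branch of the $\min$ is active at cell $j$, whereas the paper treats both branches simultaneously via the elementary inequalities $\min\{a,c\}\le \min\{a',c'\}$ for $a\le a'$, $c\le c'$ and $\min\{a,c\}-\min\{b,c\}\le a-b$; both arguments close under the same CFL condition.
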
 
\begin{proof}
Let us start by showing the positivity, we directly obtain 
$$ \rho^{n+1}_{e,j}= \rho^{n}_{e,j}-\frac{\Delta t}{\Delta x} \left(F^n_{e,j}-F^n_{e,j-1}\right)\geq \rho^{n}_{e,j}-\frac{\Delta t}{\Delta x} \norm{v} \rho^n_{e,j}\geq 0,$$
with $\norm{v}=\max\{\norm{v_1},\norm{v_2}\},$ using $\min\left\{ \rho^n_{e,j} V^n_{2,j}, s^n_{B,j}  \right\}\leq \rho^n_{e,j} V^n_{2,j}.$
For the upper bound, it is easy to see that 
\begin{align*}
    V^n_{1,j-1}- V^n_{1,j} \leq \begin{cases}
    \gamma_0 v_1(\rho^n_{1,j}), \quad &j\leq -1,\\
    0, \quad &j\geq 0,
    \end{cases}\quad
    V^n_{2,j-1}- V^n_{2,j} \leq \begin{cases}
    0, \quad &j\leq -1,\\
    \gamma_0 v_2(\rho^n_{2,j}), \quad &j\geq 0.
    \end{cases}
\end{align*}
Using $v_1(\rho^1_{\max})=v_2(\rho^2_{\max})=0$ and the mean value theorem, we get 
\begin{align*}
    V^n_{1,j-1}- V^n_{1,j} \leq \begin{cases}
    \gamma_0 \norm{v_1'}(-\rho^n_{1,j}+\rho^1_{\max}), \quad &j\leq -1,\\
    0, \quad &j\geq 0,
    \end{cases}\\
    V^n_{2,j-1}- V^n_{2,j} \leq \begin{cases}
    0, \quad &j\leq -1,\\
    \gamma_0  \norm{v_2'}(-\rho^n_{2,j}+\rho^2_{\max}), \quad &j\geq 0.
    \end{cases}
\end{align*}
Considering the case $j\leq -1,$ multiplying the first inequality by $\rho^1_{\max}$ and subtracting $V^n_{1,j}\rho^n_{1,j},$ we get  
\begin{equation}\label{eq:min_1}
    V^n_{1,j-1} \rho^1_{\max}- V^n_{1,j}\rho^n_{1,j}\leq  \left(\gamma_0 \norm{v_1'}\norm{\rho_1}+V^n_{1,j}\right) (-\rho^n_{1,j}+\rho^1_{\max}). 
\end{equation}
Similarly, writing 
\begin{align*}
    s^n_{B,j-1}- s^n_{B,j} = \begin{cases}
     -\mu \gamma_{-j-1} 
    \quad &r^n\in [0,r_{\max}),\\
    \min\{\rho^2_{\max}V^n_{2,j-1}, \mu \sum_{k=-j}^{N_\eta-1} \gamma_k\}-\min\{\rho^2_{\max}V^n_{2,j}, \mu \sum_{k=-j-1}^{N_\eta-1} \gamma_k\} \quad &r^n=r_{\max},
    \end{cases} \leq 0,
\end{align*}
because \begin{align*}
    &\min\left\{\rho^2_{\max}V^n_{2,j-1}, \mu \sum_{k=-j}^{N_\eta-1} \gamma_k\right\}-\min\left\{\rho^2_{\max}V^n_{2,j}, \mu \sum_{k=-j-1}^{N_\eta-1} \gamma_k\right\} \\
    &\leq\min\left\{\rho^2_{\max}V^n_{2,j}, \mu \sum_{k=-j}^{N_\eta-1} \gamma_k\right\}-\min\left\{\rho^2_{\max}V^n_{2,j}, \mu \sum_{k=-j-1}^{N_\eta-1} \gamma_k\right\}\\
    &=\frac{\mu \sum_{k=-j}^{N_\eta-1} \gamma_k-\mu \sum_{k=-j-1}^{N_\eta-1} \gamma_k-\modulo{\rho^2_{\max}V^n_{2,j}-\mu \sum_{k=-j}^{N_\eta-1} \gamma_k}+\modulo{\rho^2_{\max}V^n_{2,j}-\mu \sum_{k=-j-1}^{N_\eta-1} \gamma_k}}{2}\\
    &\leq \frac{\mu \sum_{k=-j}^{N_\eta-1} \gamma_k-\mu \sum_{k=-j-1}^{N_\eta-1} \gamma_k+\modulo{\mu \sum_{k=-j}^{N_\eta-1} \gamma_k-\mu \sum_{k=-j-1}^{N_\eta-1} \gamma_k}}{2}=0.
    \end{align*}
Then, we get  
\begin{equation}\label{eq:min_2}
\begin{aligned}
    \min\{\rho^1_{\max} V^n_{2,j-1},s^n_{B,j-1}\}&-\min\{\rho^n_{1,j} V^n_{2,j},s^n_{B,j}\}\\
    &\leq 
   \min\{\rho^1_{\max} V^n_{2,j},s^n_{B,j}\}-\min\{\rho^n_{1,j} V^n_{2,j},s^n_{B,j}\}\\
   &\leq\left(\rho^1_{\max}-\rho^n_{1,j}\right)V^n_{2,j}.
   \end{aligned}
 \end{equation}

Adding the two inequalities~\eqref{eq:min_1}-\eqref{eq:min_2}, due to the CFL condition~\eqref{eq:CFL}, we have 
$\rho^{n+1}_{1, j}\leq \rho^1_{\max}.$ \\
For $j\geq 0,$ we can write the following bound
$$ \rho^2_{\max} V^{n}_{2,j-1} - \rho^n_{2,j} V^{n}_{2,j} \leq (\gamma_0 \norm{v'} \norm{\rho}+\norm{v}) (\rho^2_{\max}-\rho^n_j),$$ that follows analogously to what has been done above. 
In particular, the bound holds for $j=0$, since for the inflow on road 2 we have
$$ \min\{d_B^n,\rho^2_{\max} V^{n}_{2,-1}\}\leq \rho^2_{\max} V^{n}_{2,-1}.$$
Then, we get 
$$\rho^{n+1}_{2, j}\leq \rho^{n}_{2, j}+\frac{\Delta t}{\Delta x}\left(\rho^2_{\max} V^n_{2,j-1}-\rho^n_{2,j} V^n_{2,j}\right)\leq\rho^2_{\max}. $$
This concludes the proof.
\end{proof}
\begin{remark}
We point out that the maximum principle in Proposition \ref{prop:maximum_principle} is uniform with respect to $\eta$ and this allows us to say that if they exist, the limit solutions as $\eta \to 0$ and $\eta \to +\infty$ satisfy the maximum principle, too.
\end{remark}
In the following we are in particularly interested in the model hierarchies induced by the non-local range $\eta$.
Therefore, we will investigate the models for $\eta\to 0$ and $\eta \to \infty$ in the next sections.

\section{Limit to zero}\label{subsec:limit0model}
The singular local limit is a very interesting topic regarding non-local conservation laws.
The problem is defined as follows. In the scalar case, a parameter $\eta>0$ related to the support of the kernel is fixed and the re-scaled kernel function is considered
\begin{equation*}
    \omega_\eta(x)=\frac{1}{\eta}\omega\left(\frac{x}{\eta}\right).
\end{equation*}
Owing to the assumption $\int_{\R} \omega_\eta(x)dx=1,$ when $\eta\to0^+$ the family $\omega_\eta$ converges weakly$^*$ in the sense of measures to the Dirac delta and we formally obtain the corresponding local conservation law:
\begin{equation*}
 \textbf{Non-local: }  \begin{cases} \partial_t \rho_\eta+ \partial_x(\rho_\eta (v(\rho_\eta)\ast\omega_\eta))=0,\\ \rho_\eta(0,x)=\bar \rho(x), \end{cases} \to  \textbf{ Local: }\begin{cases} \partial_t \rho+ \partial_x(\rho v(\rho))=0,\\ \rho(0,x)=\bar \rho(x),\end{cases}
\end{equation*}
with $v:\R \to \R$ being a Lipschitz continuous vector-valued function.
The above derivation is just formal and has to be rigorously justified. The question is whether the solution $\rho_\eta$ of the non-local Cauchy problem converges to the entropy admissible solution of the local Cauchy problem in some suitable topology. The rigorous derivation of this limit with the convolution inside $v$, i.e. $v(\rho_\eta \ast \omega_\eta),$ was
initially posed in \cite{AmorimColomboTexeira} motivated by numerical evidence, later corroborated in \cite{BlandinGoatin2016}. The answer is positive if we consider an even kernel function and a smooth compactly supported initial datum, as proved in  \cite{Zumbrun1999}. The solution of one-dimensional scalar non-local conservation laws with an anisotropic kernel converges to the entropy solution of the corresponding local conservation laws also in the case of monotone initial data, see~\cite{keimer2019approximation, friedrich2022conservation}, for an exponential-type kernel~\cite{coclite2020general, friedrich2022conservation,bressan2020entropy} or 
considering a positive initial datum and a convex kernel~\cite{colombo2022nonlocal}.
Nevertheless, in general, the solution of the non-local Cauchy problem does not converge to the solution of the local one and in \cite{ColomboCrippaSpinoloontheSingular} the authors show three counterexamples in this sense, while in \cite{ColomboCrippaGraffSpinolo}, the role of numerical viscosity in the numerical investigation of such non-local-to-local limit is highlighted.\\

\subsection{A local buffer model}
A local buffer model which strongly inspired the non-local model \eqref{eq:model} is originally introduced in \cite{herty2009novel}.
For completeness we recall the model of \cite{herty2009novel}.
Note that this model is derived only for the same flux function on each road, but an extension to different flux functions is straightforward.
The modeling equations are given by
\begin{equation}
\begin{aligned} \label{eq:model0herty} 
&\partial_t \rho_1(t,x) + \partial_x \left(\rho_1(t,x) v_1(\rho_1(t,x))\right)=0,\\
&\partial_t \rho_2(t,x) + \partial_x (\rho_2(t,x) v_2(\rho_2(t,x)))=0.
\end{aligned}
\end{equation}
Since we consider a local model we only need to describe the fluxes at the intersection, which couples the two equations.
Following \cite{herty2009novel} this coupling is given by
\begin{align*}
    &q_1(t)=\min\{s_B(t),D_1(\rho_1(t,0^-))\},\qquad q_2(t)=\min\{d_B(t),S_2(\rho_2(t,0^+)\}\\
    &s_B(t)=\begin{cases}
        \mu,&\text{if }r(t)\in[0,r_{\max}),\\
        \min\{S_2(\rho_2(t,0^+),\mu\},&\text{if }r(t)=r_{\max},
    \end{cases}\\
    &d_B(t)=\begin{cases}
        \mu,&\text{if }r(t)\in (0,r_{\max}],\\
        \min\{D_1(\rho_1(t,0^-)),\mu\},&\text{if }r(t)=0.
    \end{cases}
\end{align*}
Here, $S_i$ and $D_i$ with $i\in\{1,2\}$ are the usual supply and demand functions \cite{lebacque1996godunov} defined for a flux $f_i$ with a single maximum at $\sigma$ by
\begin{align*}
    D_i(\rho)=\begin{cases}
        f_i(\rho),&\rho\leq\sigma,\\
        f_i(\sigma),&\rho>\sigma,
    \end{cases}\qquad
    S_i(\rho)=\begin{cases}
        f_i(\sigma),&\rho\leq\sigma,\\
        f_i(\rho),&\rho>\sigma.
    \end{cases}
\end{align*}
Note that, here $f_i(\rho):=\rho v_i(\rho)$ holds.
Finally, the development of the buffer is described by
\[r'(t)=q_1(t)-q_2(t).\]
For a more detailed discussion of the model and an extension to more general junctions we refer the reader to \cite{herty2009novel}. 

Comparing the local model with \eqref{eq:model}, we see that the non-local version adapts the structure of the local model.
Besides the non-local velocity, one of the main differences between the two models is the definition of the supply and demand functions.
Since the non-local model allows for a higher flow, as the local density at $x$  and the non-local velocity are not directly coupled, the flow is not allowed to exceed $\rho_{\max}^2V_2(t,0^+)$ in contrast to $S(\rho_2(0^+,t))$ at the junction point.
Moreover, extending this fact to the whole interval $[-\eta,0)$ with similar ideas as in \cite{friedrich2020onetoone, friedrich2022network}, we end up with the model \eqref{eq:model}.\\
Even though the local model \eqref{eq:model0herty} strongly inspired the non-local one, it is not a priori clear which local solution will be obtained as $\eta \to 0$.
For example, the 1-to-1 model \eqref{eq:1to1NObuffer} without a buffer seems to converge to the vanishing viscosity solution of a discontinuous conservation law in certain cases, see \cite{friedrich2020onetoone}.\\
For the non-local model \eqref{eq:model}, it becomes apparent that the solutions might be different to the solution of the local model \eqref{eq:model0herty}.
This is induced by the different definition of the supply and demand functions mentioned above.
Hence, when supply or demand in the model \eqref{eq:model0herty} are active differences can occur, since, in general,
\begin{align*}
   &D_1(\rho_1(t,0^-))\neq \rho_1(t,0^-)v_2(\rho_2(t,0^+)=\lim_{\eta\to 0}\rho_1(t,0^-)V_2(t,0^+),\\ & S_2(\rho_2(t,0^+))\leq \rho^2_{\max} v_2(\rho_2(t,0^+))=\lim_{\eta\to 0}\rho_{\max}^2V_2(t,0^+). 
\end{align*}
The fact that the model \eqref{eq:model}, in general, does not converge to solutions of \eqref{eq:model0herty} is also shown by the following example:

\begin{example}\label{lem:counter}
Let us consider constant initial data $\rho_1$ and $\rho_2$ on each road with the same flux function on both roads.
Further, let $S(\rho_2)<D(\rho_1)$ and $S(\rho_2)<\mu$ hold.
In this case, we can exclude that the solution of \eqref{eq:model} converges to the solution of the local model~\eqref{eq:model0herty}.
Therefore, we show that the buffer is different for the model \eqref{eq:model0herty} and \eqref{eq:model}.
We start with the local case.
Here, we have
\[q_1(t)=\min\{\mu,D(\rho_1)\},\quad q_2(t)=\min\{S(\rho_2),\mu\}.\]
Moreover, due to $S(\rho_2)<D(\rho_1)<\mu,$
\[r'(t)=q_1(t)-q_2(t)>0,\] this means that the buffer is increasing.
On the contrary, in the non-local case we have 
\[r'(t)=\min\{\mu,\rho_1(t,0^-) V_2(t,0)\}-\min\{\mu,\rho_1(t,0^-) V_2(t,0),\rho^2_{\max} V_2(t,0)\}.\]
Due to the choice of the same flux function and thanks to the maximum principle we get $$\rho_1(t,x)\leq \rho^1_{\max}=\rho^2_{\max},$$ and hence,
\[r'(t)=\min\{\mu,\rho_1(t, 0^-) V_2(t,0^-)\}-\min\{\mu,\rho_1(t, 0^-) V_2(t,0)\}=0.\]
Therefore, the buffer does not increase for every $\eta>0$, this also holds in the limit $\eta\to 0.$ For this reason, the solution of the non-local problem does not, in general, converge to the solution of the local one. 
\end{example}
Even though convergence to the model \eqref{eq:model0herty} can be excluded for certain initial data, there are still cases where we can observe (numerically) a convergence.

\subsection{Limit model}
Since the local model \eqref{eq:model0herty} seems not to be the correct limit model of \eqref{eq:model}, let us formally derive a potential candidate for the limit model of \eqref{eq:model}.
The limit of $\omega_\eta$ to the Dirac delta needs to be considered in the non-local velocities and the supply function.
Hence, the model \eqref{eq:model} formally converges to 
\begin{equation}
\begin{aligned} \label{eq:model0} 
&\partial_t \rho_1(t,x) + \partial_x \left(\rho_1(t,x) v_1(\rho_1(t,x))\right)=0,\\
&\partial_t \rho_2(t,x) + \partial_x (\rho_2(t,x) v_2(\rho_2(t,x)))=0,\\
& r'(t)=\min\{s_B(t,0^-),\,\rho_1(t, 0^-) v_2(\rho_2(t,0^+)\}-\min\{d_B(t),\,\rho^2_{\max} v_2(\rho_2(t,0^+))\},\\
\end{aligned}
\end{equation}
where
\begin{align*}
&s_B(t,x)=\begin{cases}
    \mu,  \quad &r(t)\in [0, r_{\max}),\\
    \min\{\rho^2_{\max} v_2(\rho_2(t,x)),\, \mu\}, &r(t)= r_{\max},
\end{cases}\\
&d_B(t)=\begin{cases}
        \mu, \quad &r(t)\in (0, r_{\max}],\\
    \min\{\rho_1(t,0^-)  v_2(\rho_2(t,0+)),\,\mu\}, &r(t)= 0.
\end{cases}
\end{align*}
Comparing the dynamics to \eqref{eq:model0herty} we see that the outflow of the first road and the inflow to the second road and therefore the development of the buffer are different.
The reason is similar as before that
\begin{align*}
   D_1(\rho_1(t,0^-))\neq \rho_1(t,0^-)v_2(\rho_2(t,0^+)\quad\text{and}\quad S_2(\rho_2(t,0^+))\leq \rho^2_{\max} v_2(\rho_2(t,0^+))
\end{align*}
holds.\\
Note that we will not prove the well-posedness of \eqref{eq:model0} or the limit from \eqref{eq:model} to \eqref{eq:model0} rigorously.
Nevertheless, we investigate the limit and model \eqref{eq:model0} numerically in the following.
In particular, a numerical convergence to model \eqref{eq:model0} can be obtained.

\subsection{Numerical examples}
During this section the space step size for all examples is chosen as $\Delta x=10^{-3}$ and the time step is chosen according to the CFL condition \eqref{eq:CFL}.
We deal with the following example: $v_1(\rho)=1-\rho$, $v_2(\rho)=1-5\rho/3$, constant initial data of $\rho_{1,0}\equiv 0.75$ and $\rho_{2,0}\equiv 0.5$ with $r_{\max}=\infty$ and $r_0=0$.
If not stated otherwise, we use $\mu=0.15$.
Both roads are in a congested state but due to the lower maximum density of the second road, the latter one is even more congested.
This results in a backwards moving traffic jam on the first road and, depending on the capacity, in an increasing buffer.
\begin{remark}
    We note that the obtained results are very similar in case of a limited buffer, i.e., $r_{\max}<\infty$.
    As soon as the buffer gets full, an additional wave is induced moving backwards on the first road.
    To simplify the discussion and concentrate on the effects induced by the capacity, we stick to an infinite buffer.
\end{remark}
Before considering the limit process, we start with a comparison of the dynamics given by \eqref{eq:model} for different kernels in the situation described above and $\eta=0.5$.
Here, we consider a constant kernel $\omega_\eta(x)=1/\eta$, a linear decreasing kernel, i.e. $\omega_\eta(x)=2(\eta-x)/\eta^2$ and a quadratic decreasing kernel, i.e. $\omega_\eta(x)=3(\eta^2-x^2)/(2\eta^3)$.
The approximate solutions are shown in Figure \ref{fig:1}.
Since most of the interactions take place on the first street, we have increased the visual representation of this road.
Due to the bottleneck situation between the two roads, the buffer and the congestion on the first road are increasing.
In particular, for all models the flow on the first road is higher than the capacity $\mu$ of the buffer.
As can be seen, the linear and quadratic kernels behave very similar and provide a rather continuous increase of the density until drivers are aware of the junction around $x\approx-0.5$ which results in a steeper increase.
However, the constant kernel behaves differently.
There are two effects visible: First, there are additional increases depending on $\eta=0.5$.
This effect is also present without a buffer model, i.e. in the model \eqref{eq:1to1NObuffer} and a constant kernel.
The approximate solution without a buffer can also be seen in Figure \ref{fig:1}.
The obtained behavior is a specific feature of the kernel, since drivers become aware of the bottleneck at $x=-0.5$ and the traffic directly ahead, which results in a traffic jam.
This traffic jam then resolves but there is a point in front of the junction ($x\approx-0.25$) where the current flow rate is influenced by the capacity of the buffer which increases the density.
This is the second effect which can be obtained.
Due to the non-locality this effects propagate backwards.
The effect of the capacity on the flow can also be observed for the other two kernels, even though it is smoothed out slightly.

\setlength{\fwidth}{0.8\textwidth}
\begin{figure}
    \centering
    \input{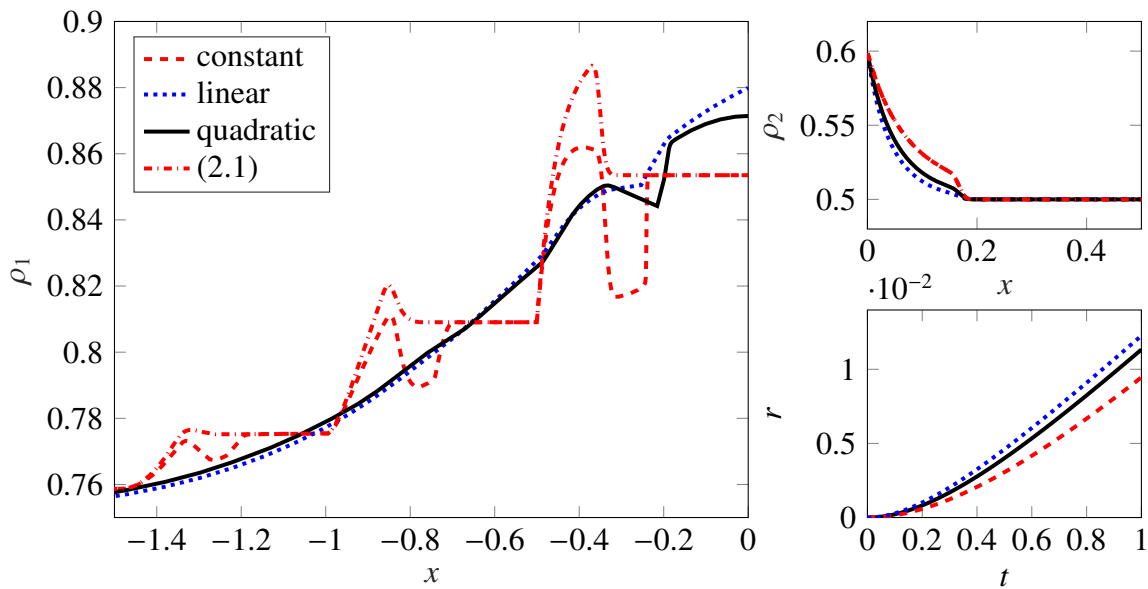}
        \caption{Approximate solution of the non-local buffer model with non-local range of $\eta=0.5$ with different kernels and the approximate solution of \eqref{eq:1to1NObuffer} with a constant kernel at $T=1$: approximate solutions of road 1, left, of road 2, top right and the evolution of the buffer,  bottom right.}
    \label{fig:1}
\end{figure}
Even though the effect of the increasing density for the constant kernel is explainable, we want to consider less effects on the dynamics to better understand the behavior.
Further, as the dynamics for the linear and quadratic kernel are rather similar, we restrict ourselves to the linear kernel for the remaining part of this section.

Next, we turn our attention to the numerical limit $\eta\to 0$.
For the local models \eqref{eq:model0herty} and \eqref{eq:model0} we use a Godunov scheme, where the fluxes at the junction point are defined by the coupling.
In the above mentioned example the capacity is active.
This holds for the local case, too, such that the two local models produce the same solution.
Further, this results in the same outflow of the first road in all models given by $\mu$.
Hence, for a decreasing sequence of $\eta$, the convergence can be numerically obtained, as can be seen in Figure \ref{fig:2}.
Small differences can still be seen on road 2, as the inflow on this road is still different for the models.
We note that this difference may disappear in the limit.
Nevertheless, choosing an even smaller capacity, e.g. $\mu=0.05$, gives the same dynamics for all models.
\begin{figure}
    \centering
    \input{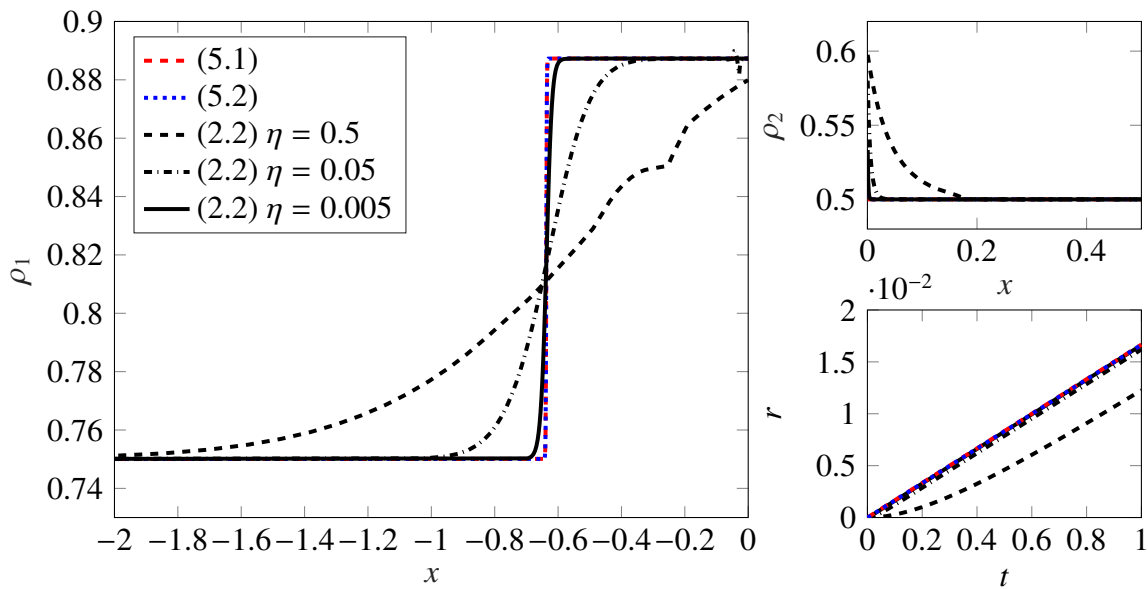}
        \caption{Approximate solutions of the non-local buffer model with different non-local ranges and the approximate solutions of the local models \eqref{eq:model0herty} and \eqref{eq:model0} at $T=1$ and a capacity of $\mu=0.1$ at $T=1$: approximate solutions of road 1, left, of road 2, top right and the evolution of the buffer,  bottom right.}
    \label{fig:2}
\end{figure}
In contrast, increasing $\mu$ demonstrates the possible convergence to \eqref{eq:model0} instead of \eqref{eq:model0herty}.
In Figure \ref{fig:3}, the same example as before, but with $\mu=0.15$, is considered.
In the local model \eqref{eq:model0herty} the outflow of the first road is higher than in the local model \eqref{eq:model0}.
This results in a lower density, but fuller buffer.
The decreasing sequence of $\eta$ shows that \eqref{eq:model0} might be the limit of the non-local buffer model \eqref{eq:model}.

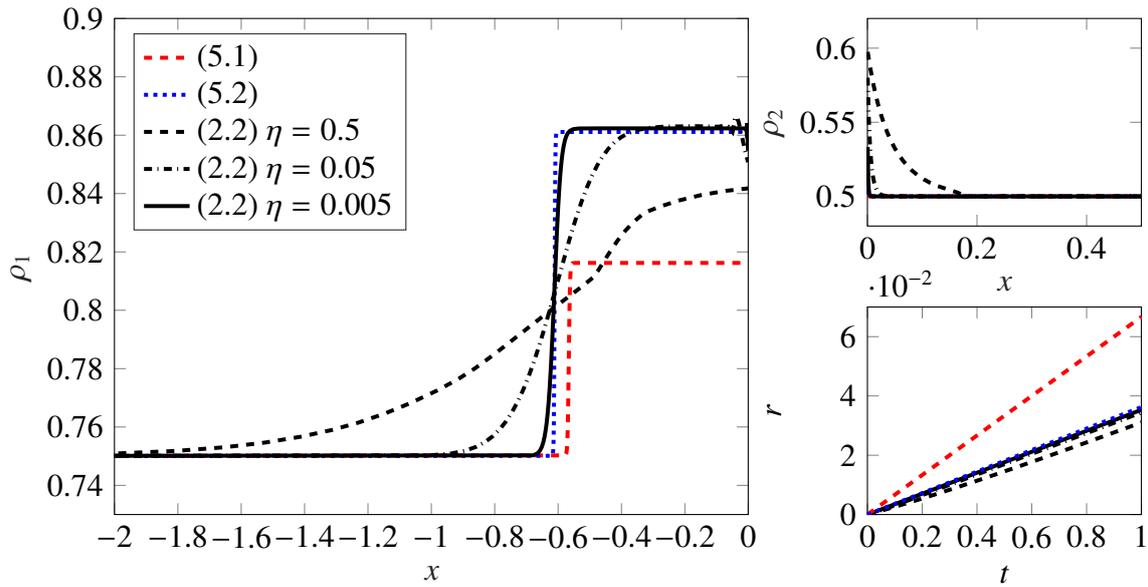
\begin{figure}
    \centering
%
%
\begin{tikzpicture}

\begin{axis}[%
width=0.606\fwidth,
height=0.478\fwidth,
at={(0\fwidth,0\fwidth)},
scale only axis,
xmin=-2,
xmax=0,
xlabel style={font=\color{white!15!black}},
xlabel={$x$},
ymin=0.73,
ymax=0.9,
ylabel style={font=\color{white!15!black}},
ylabel={$\rho_1$},
axis background/.style={fill=white},
legend style={legend cell align=left, align=left, draw=white!15!black, legend pos=north west}
]
\addplot [color=red, dashed, line width=1.5pt]
  table[row sep=crcr]{%
-2.9995	0.75\\
-0.5765	0.750266036368497\\
-0.5745	0.750771372814923\\
-0.5735	0.751307667347457\\
-0.5725	0.752204126354593\\
-0.5715	0.75367998495745\\
-0.5705	0.756050002892725\\
-0.5695	0.759708401667451\\
-0.5685	0.765026106682929\\
-0.5665	0.780576326315478\\
-0.5645	0.797418332166008\\
-0.5625	0.808414379811406\\
-0.5615	0.811466428733813\\
-0.5605	0.813387452334606\\
-0.5595	0.814555731278141\\
-0.5575	0.815660245043571\\
-0.5545	0.816117731854837\\
-0.5465	0.816226406039279\\
-0.0445000000000002	0.816227766016838\\
-0.000500000000000167	0.816227766016838\\
};
\addlegendentry{\eqref{eq:model0herty}}

\addplot [color=blue, dotted, line width=1.5pt]
  table[row sep=crcr]{%
-2.9995	0.75\\
-0.6185	0.750125203468011\\
-0.6165	0.750782851315597\\
-0.6155	0.751942257106099\\
-0.6145	0.754742819192044\\
-0.6135	0.761159696498388\\
-0.6125	0.774259879334257\\
-0.6095	0.841164630112914\\
-0.6085	0.852576306557074\\
-0.6075	0.857743720957382\\
-0.6065	0.859829335258659\\
-0.6055	0.860630160116819\\
-0.6035	0.861044274393523\\
-0.5955	0.861111086612657\\
-0.000500000000000167	0.861111111111111\\
};
\addlegendentry{\eqref{eq:model0}}

\addplot [color=black,dashed, line width=1.5pt]
  table[row sep=crcr]{%
-2.9995	0.750000590994808\\
-2.0105	0.750854686182429\\
-1.8065	0.75179287428664\\
-1.6525	0.753051652189611\\
-1.5255	0.754655006281538\\
-1.4135	0.756635832756599\\
-1.3155	0.75892937846085\\
-1.2305	0.761489935924316\\
-1.1485	0.764539394771195\\
-1.0665	0.768164761301537\\
-0.9815	0.772505606721675\\
-0.9165	0.776330880600435\\
-0.8615	0.780148880064498\\
-0.8005	0.784988517354584\\
-0.6505	0.797737246274394\\
-0.5205	0.808498844085891\\
-0.4895	0.811242142869104\\
-0.4745	0.813461977133895\\
-0.4415	0.81914922500021\\
-0.4125	0.82383944617672\\
-0.3875	0.827294929827685\\
-0.3625	0.830178212164761\\
-0.3365	0.832606254475591\\
-0.3165	0.833896770187068\\
-0.2805	0.835459865510737\\
-0.2165	0.837686026706975\\
-0.1475	0.839503601129131\\
-0.0714999999999999	0.840917038431532\\
-0.000500000000000167	0.841775324456624\\
};
\addlegendentry{\eqref{eq:model} $\eta=0.5$}

\addplot [color=black,dashdotted, line width=1.5pt]
  table[row sep=crcr]{%
-2.9995	0.75\\
-1.0035	0.750286506670533\\
-0.9365	0.750923392864646\\
-0.8925	0.751893452481322\\
-0.8585	0.753205033213324\\
-0.8305	0.754847890828993\\
-0.8065	0.756809879602559\\
-0.7845	0.759180733035682\\
-0.7645	0.7619103049647\\
-0.7455	0.765090678952771\\
-0.7275	0.768692391168461\\
-0.7105	0.77265918915497\\
-0.6935	0.777193610201926\\
-0.6755	0.782606381431659\\
-0.6565	0.788953704111089\\
-0.6355	0.796601893733348\\
-0.6065	0.807854614032465\\
-0.5645	0.824126700095859\\
-0.5435	0.831587913041475\\
-0.5255	0.83737269724289\\
-0.5085	0.842226649142078\\
-0.4925	0.846218236897138\\
-0.4765	0.849647875306508\\
-0.4595	0.852700974098807\\
-0.4415	0.855323181985891\\
-0.4225	0.857490865163055\\
-0.4015	0.859284141403948\\
-0.3775	0.860719235737378\\
-0.3495	0.861785731969986\\
-0.3145	0.86251667749434\\
-0.2645	0.86294151994797\\
-0.1635	0.863092919925965\\
-0.0964999999999998	0.862902260050111\\
-0.0884999999999998	0.862990830132902\\
-0.0714999999999999	0.863475685039559\\
-0.0634999999999999	0.862967339227438\\
-0.0545	0.861767232468084\\
-0.0495000000000001	0.861110656599982\\
-0.0474999999999999	0.862127447505683\\
-0.0425	0.865282138895796\\
-0.0394999999999999	0.866182930176528\\
-0.0365000000000002	0.866240398008143\\
-0.0335000000000001	0.865646535446152\\
-0.0295000000000001	0.864195181983181\\
-0.0225	0.860828446616595\\
-0.00950000000000006	0.854473084540064\\
-0.000500000000000167	0.85075419212349\\
};

\addlegendentry{\eqref{eq:model} $\eta=0.05$}

\addplot [color=black, line width=1.5pt]
  table[row sep=crcr]{%
-2.9995	0.75\\
-0.6785	0.750284145032471\\
-0.6655	0.750919749979607\\
-0.6575	0.751869108654573\\
-0.6515	0.753154341833672\\
-0.6465	0.75484315080062\\
-0.6425	0.756782241949678\\
-0.6385	0.759428742467975\\
-0.6355	0.761998780669925\\
-0.6325	0.765171506995318\\
-0.6295	0.769034579261025\\
-0.6265	0.773659594829433\\
-0.6225	0.781071595172676\\
-0.6185	0.78984168144223\\
-0.6135	0.802251370336444\\
-0.6035	0.827509006978266\\
-0.5995	0.836020859690342\\
-0.5955	0.843044453946414\\
-0.5925	0.847306076875812\\
-0.5895	0.850765099461317\\
-0.5865	0.853515722613038\\
-0.5825	0.856269548277353\\
-0.5785	0.858209609216409\\
-0.5745	0.859553456402336\\
-0.5695	0.860652359896612\\
-0.5625	0.861516147811357\\
-0.5525	0.862053619350027\\
-0.5345	0.862305849197216\\
-0.4615	0.862349211208404\\
-0.00649999999999995	0.862324672565823\\
-0.00550000000000006	0.861510731938782\\
-0.00349999999999984	0.864140408416538\\
-0.00249999999999995	0.862324598894713\\
-0.000500000000000167	0.854661918093421\\
};

\addlegendentry{\eqref{eq:model} $\eta=0.005$}
\end{axis}

\begin{axis}[%
width=0.262\fwidth,
height=0.2\fwidth,
at={(0.72\fwidth,0.278\fwidth)},
scale only axis,
xmin=0,
xmax=0.5,
xlabel style={font=\color{white!15!black}},
xlabel={$x$},
ymin=0.48,
ymax=0.62,
ylabel style={font=\color{white!15!black}},
ylabel={$\rho_2$},
axis background/.style={fill=white}
]
\addplot [color=red, dashed, line width=1.5pt, forget plot]
  table[row sep=crcr]{%
0.000500000000000167	0.5\\
2.9995	0.5\\
};
\addplot [color=blue, dotted, line width=1.5pt, forget plot]
  table[row sep=crcr]{%
0.000500000000000167	0.516666666666667\\
0.00150000000000006	0.5\\
2.9995	0.5\\
};
\addplot [color=black,dashed, line width=1.5pt, forget plot]
  table[row sep=crcr]{%
0.000500000000000167	0.597472185559083\\
0.00450000000000017	0.588168769394733\\
0.00950000000000006	0.578103460537878\\
0.0145	0.569459470436804\\
0.0194999999999999	0.561977164225659\\
0.0245000000000002	0.555456803489137\\
0.0305	0.548684222702793\\
0.0365000000000002	0.542862182716992\\
0.0425	0.537827232274008\\
0.0495000000000001	0.532777689698031\\
0.0565000000000002	0.528464470015991\\
0.0645000000000002	0.524276429394101\\
0.0725000000000002	0.52073922530378\\
0.0815000000000001	0.517395475976733\\
0.0914999999999999	0.514321079399364\\
0.1025	0.511563312689206\\
0.1145	0.509144523914403\\
0.1285	0.506926456064799\\
0.1445	0.504988871913562\\
0.1625	0.502806817432072\\
0.1785	0.500392842219601\\
0.1865	0.500050393494441\\
0.2105	0.500000000405741\\
2.9995	0.5\\
};
\addplot [color=black,dashdotted, line width=1.5pt, forget plot]
  table[row sep=crcr]{%
0.000500000000000167	0.580095908987072\\
0.00249999999999995	0.553153511181687\\
0.00450000000000017	0.536323115681395\\
0.00649999999999995	0.525281407446003\\
0.00850000000000017	0.517809293659632\\
0.0105	0.512648175491919\\
0.0125000000000002	0.509033371225747\\
0.0145	0.506477097519489\\
0.0165000000000002	0.504657134221905\\
0.0194999999999999	0.502849432274904\\
0.0225	0.50174816768196\\
0.0265	0.50091359623202\\
0.0325000000000002	0.500346078172814\\
0.0434999999999999	0.500058554588237\\
0.0825	0.500000107981207\\
2.9995	0.5\\
};
\addplot [color=black, line width=1.5pt, forget plot]
  table[row sep=crcr]{%
0.000500000000000167	0.533340230661428\\
0.00150000000000006	0.512365270736548\\
0.00249999999999995	0.504753440346409\\
0.00349999999999984	0.501851796297876\\
0.00450000000000017	0.500725104932283\\
0.00649999999999995	0.500111708650214\\
0.0114999999999998	0.500001044970527\\
0.4655	0.5\\
2.9995	0.5\\
};
\end{axis}

\begin{axis}[%
width=0.262\fwidth,
height=0.2\fwidth,
at={(0.72\fwidth,0\fwidth)},
scale only axis,
xmin=0,
xmax=1,
ymin=0,
ymax=0.07,
xlabel={$t$},
ylabel={$r$},
axis background/.style={fill=white}
]
\addplot [color=black,dashed, line width=1.5pt, forget plot]
  table[row sep=crcr]{%
0	0\\
0.04	0.00101752630535068\\
0.0800000000000001	0.00207064500973675\\
0.12	0.00315539262490172\\
0.17	0.00455073823819596\\
0.22	0.00598475108053464\\
0.27	0.00745276270095374\\
0.33	0.00925367139681099\\
0.39	0.0110919162167016\\
0.46	0.0132770656368968\\
0.53	0.0154994627839771\\
0.61	0.0180776912568095\\
0.7	0.0210187008958413\\
0.8	0.0243272091795981\\
0.91	0.028005745637588\\
1	0.0310396067845531\\
};

\addplot [color=black,dashdotted, line width=1.5pt, forget plot]
  table[row sep=crcr]{%
0	0\\
0.01	0.000257696616772884\\
0.03	0.00082788799251321\\
0.05	0.00144608962707338\\
0.0800000000000001	0.00242492194455024\\
0.12	0.0037780645057095\\
0.19	0.00619369969813399\\
0.38	0.0128041873173521\\
1	0.0343967341076181\\
};

\addplot [color=black, line width=1.5pt, forget plot]
  table[row sep=crcr]{%
0	0\\
0.01	0.000305916884149715\\
0.05	0.00171473436527969\\
1	0.035315954200354\\
};
\addplot [color=red, dashed, line width=1.5pt, forget plot]
  table[row sep=crcr]{%
0	0\\
1	0.0666666666666667\\
};
\addplot [color=blue, dotted, line width=1.5pt, forget plot]
  table[row sep=crcr]{%
0	0\\
0.01	0.000343153553119091\\
1	0.0362459313306784\\
};
\end{axis}

\end{tikzpicture}%
    \caption{Approximate solutions of the non-local buffer model with different non-local ranges and the approximate solutions of the local models \eqref{eq:model0herty} and \eqref{eq:model0} at $T=1$ and a capacity of $\mu=0.15$ at $T=1$: approximate solutions of road 1, left, of road 2, top right and the evolution of the buffer,  bottom right.}
    \label{fig:3}
\end{figure}

\begin{remark}
We remark that even without a buffer the non-local model does not necessarily converge to the solution which maximizes the flux through the junction.
In particular, for a local 1-to-1 junction without a buffer there a several possible Riemann solvers available at the interface, see e.g. \cite{adimurthi2005optimal} and the references therein. 
In \cite{friedrich2020onetoone}, it has been already discovered that the non-local 1-to-1 model without a buffer seems to converge to the vanishing viscosity solution and not to the solution given by supply and demand which maximizes the flux through the junction.
Hence, we cannot expect a convergence to the model \eqref{eq:model0herty} which also maximizes the flux through the junction for every case, in particular, if the buffer is not active.
\end{remark}

\section{An infinite non-local reach}
Besides the limit $\eta\to 0$ also the case $\eta\to\infty$ is of great interest.
This limit represents the situation in which drivers have perfect knowledge about the downstream traffic.
In~\cite{chiarello2018global}, it has been proven that on a single road a non-local traffic flow model tends to the solution of a linear transport equation with transport velocity $V_{\max}$.
The limit for non-local traffic networks without buffers has been already investigated in \cite{friedrich2022network}.
Here, the limit equations share similarities to a model which is usually applied in the context of supply chains~\cite{ArmbrusterDegondRinghofer2006}.
The key characteristic is that cars travel with a constant velocity until a certain capacity is reached.
This is very similar to what happens when goods pass through a processor, cf. the  supply chain models introduced in \cite{ArmbrusterDegondRinghofer2006, goettlich2010supplychains}.
Therefore, we start by discussing the solution concept of this model.

\subsection{A supply chain model}
Following \cite{ArmbrusterDegondRinghofer2006, goettlich2010supplychains} two processors with one buffer in between can be described by:
\begin{equation}\label{eq:prodmodel}
    \begin{aligned}
       & \partial_t \rho_1 +\partial_x \min\{\rho_1v,\mu_1\}=0\\
    &\partial_t \rho_2 +\partial_x \min\{\rho_1v,\mu_2\}=0\\
    &r'(t)=\min\{\rho_1(t,0^-)v,\mu_1\}-d_B(t)\\
    &d_B(t)=\begin{cases} \min\{\rho_1(t,0^-)v,\mu_1,\mu_2\},\quad &\text{if }r(t)=0,\\
    \mu_2,\quad &\text{if }r(t)>0.
    \end{cases}
\end{aligned}
\end{equation}
Note that we consider an unlimited buffer for simplicity.
Further, the buffer have different capacities, but the same processing velocity.
In the case $\mu_1=\mu_2$ and $r_0=0$ we can view the problem as a single processor as the buffer will never fill up.
For the initial data
\begin{equation}
    \rho_0(x)=\begin{cases} \rho_1,\quad &x<0,\\
    \rho_2, & x>0,
    \end{cases}
\end{equation}
The solution is
\begin{itemize}
    \item a linear transport with velocity $v$, if $\rho_i<\mu_i/v$ $i\in\{1,2\}$, or
    \item a rarefaction wave with intermediate state $\mu_2/v$, if $\rho_1>\mu_1/v$ and $\rho_2<\mu_2/v$.
\end{itemize}
In the latter case the capacity of the buffer is not sufficient to handle the complete flux, such that the flux decreases to the capacity.
In \cite{friedrich2022network}, it is shown that this situation, i.e. \eqref{eq:prodmodel} with $\mu_1=\mu_2$ and $r_0=0$, is the limit of the 1-to-1 model \eqref{eq:1to1NObuffer} without a buffer.

Now, we consider the more general case, $\mu_1\neq \mu_2$.
For simplicity, the buffer is initially empty and we consider only the case $\rho_i<\mu_i/v$ $i\in\{1,2\}$, since this case is the most important one for our upcoming limit cases.
Explicit formulas for the solution in all cases can be found in \cite[Eq. (4.21)]{goettlich2010supplychains}.
The solution now depends on the outgoing flux of the first processor. 
If it is small enough, all goods can be transported, otherwise only up to the capacity $\mu_2$.
In fact, we have
\begin{itemize}
    \item for $\rho_1\leq\mu_2/v$ a linear transport and 
    \item for $\rho_1>\mu_2/v$ a rarefaction wave with intermediate state $\mu_2/v$
\end{itemize}
as a solution.
In the latter case, the buffer starts to fill.
 
\subsection{Limit models}\label{subsec:limitinftymodel}
Now, we derive the limit models of \eqref{eq:model}.
Therefore, we assume that the initial data $\rho_{1,0}$ and $\rho_{2,0}$ are of compact support.
Note that due to the finite speed of propagation also the solutions $\rho_1(t,\cdot)$ and $\rho_2(t,\cdot)$ are compactly supported.
We can use the following result proven in \cite[Lemma 4.1]{friedrich2022network}:
in the $L^1_{loc}$ limit the non-local velocities converge to
\begin{align}\label{eq:velconv}
    V_1(t,x)\to 0\qquad\text{and}\qquad V_2(t,x)\to v_2(0)\qquad\text{for }\eta \to \infty.
\end{align}
Using similar arguments we can deduce
\begin{align}
    & s_B(t,x)\to \begin{cases}
        \mu\quad & r(t)\in[0,r_{\max}),\\
        \min\{\rho_{\max}^2 v_2(0), \mu\}\quad & r(t)=r_{\max},
    \end{cases}
    \qquad\text{and}\\
    & d_B(t)\to \begin{cases}
        \mu\quad & r(t)\in (0,r_{\max}],\\
        \min\{\rho_1(t,0^-) v_2(0), \mu\}\quad & r(t)=0,
    \end{cases}\qquad\text{for }\eta \to \infty. \label{eq:inflowlimit}
\end{align}
To discuss the limit models in detail, we distinguish two cases depending on the maximum flux on each road.
In the limit the maximum flux of the first road is given by the capacity $\mu$ and of the second road by $\rho_{\max}^2v_2(0)$.
\paragraph{\textbf{Case 1: $\mu\leq\rho_{\max}^2 v_2(0)$}} In this case the limit equations are given by
    \begin{align*}
        &\partial_t \rho_1(t,x) +\partial_x \min\{\rho_1(t,x)v_2(0),\mu\}=0\\
        &\partial_t \rho_2(t,x) +\partial_x \rho_2(t,x)v_2(0)=0,
    \end{align*}
where the inflow on road 2 is given by \eqref{eq:inflowlimit} and hence depends on the buffer size.
If we assign an artificial capacity to the second road which is greater or equal to the maximum flow, i.e. $\mu_2\geq\rho_{\max}^2v_2(0)$, the dynamics are described by
  \begin{align*}
      \partial_t \rho_2(t,x) +\partial_x \min\{\rho_2(t,x)v_2(0),\mu_2\}=0.
  \end{align*}
  Obviously, this does not change the dynamics on the second road, but now the similarities with the model \eqref{eq:prodmodel} become apparent.
  However, the inflow on road 2 and the capacity of the buffer given in \eqref{eq:inflowlimit} do not match the 'artificial' capacity of the second road and are smaller, i.e. $\mu\leq \mu_2$.
  Thus, the model can be seen as an extension of the production model \eqref{eq:prodmodel}, which allows a smaller capacity of the buffer.
  Such models have been studied in \cite{kathinkadiss}.
   Note that once $r(t^*)=0$ for some $t^*\geq 0$ holds, the buffer stays empty for $t\geq t^*$.
   In this case, the complete limit dynamics coincide with \eqref{eq:prodmodel}.
   In contrast, if $r(t)>0$, the buffer decreases.
   \begin{example}
    To get a better understanding of the dynamics of the limit model, we focus on a specific initial condition with an empty initial buffer and infinite capacity of the buffer, i.e.
\begin{align}\label{eq:example}
    \rho_{1,0}(x)=\begin{cases}
        \rho_1,\qquad &\text{if }x\in[a,b],\\
        0,&\text{ else}
    \end{cases}\qquad \rho_{2,0}(x)=0,\qquad r(0)=0,\ r_{\max}=\infty. 
\end{align}
for $a<b<0$. 
The buffer model has a capacity of $\mu$ on the first road.
This capacity is less than the capacity on the second road.
Hence, the second road can take all the flux coming from the first road and the buffer does not increase. 
Further, due to the empty initial buffer, the buffer remains empty.
The solution depends on the value of $\rho_1$:
    \begin{enumerate}
    \item $\mu/v_2(0)>\rho_1$: Here, we have a linear transport.
     \item $\rho_1>\mu/v_2(0)$: The solution of the buffer model is a rarefaction wave with constant state $\mu/v_2(0)$ at $b$ and a shock at $a$, which moves onto the second road. Both the shock at $a$ and the rarefaction wave move at the speed $\mu/\rho_1<v_2(0)$ on the first road.
     On the second road the speed increases to $v_2(0)$.
     \end{enumerate}
   \end{example}
\paragraph{\textbf{Case 2} $\mu>\rho_{\max}^2 v_2(0)$} In this case not only the inflow onto road 2 depends on the buffer size but the flux on the first road, too.
The model equations for the first road are given by
\begin{align*}
    \begin{cases}
    \partial_t \rho_1 +\partial_x \min\{\rho_1v_2(0),\mu\}=0,\quad &\text{ if }r(t)\in[0,r_{\max})\\
    \partial_t \rho_1 +\partial_x \min\{\rho_1v_2(0),\rho_{\max}^2v_2(0)\}=0,\quad &\text{ if }r(t)=r_{\max}.
    \end{cases}
\end{align*}
and the second road is the same to the previous case:
\begin{align*}
    \partial_t \rho_2 +\partial_x \rho_2 v_2(0)=0.
\end{align*}
More importantly, the inflow onto road 2 is given by 
\begin{align*}
\begin{cases}
    \rho_{\max}^2v_2(0),\quad &\text{ if }r(t)\in (0,r_{\max}],\\
    \min\{\rho_1(t,0-)v_2(0),\rho_{\max}^2v_2(0)\},\quad &\text{ if }r(t)=0.
\end{cases} 
\end{align*}
This results in the following development of the buffer in the limit:
\begin{align*}
    r'(t)=\begin{cases}
       \min\{\mu,\rho_1(t,0-)v_2(0)\}-\min\{\rho_1(t,0-)v_2(0),\rho_{\max}^2v_2(0)\}, \quad &\text{if }r(t)=0,\\
       \min\{\mu,\rho_1(t,0-)v_2(0)\}-\rho_{\max}^2v_2(0), \quad &\text{if }r(t)\in (0,r_{\max}),\\
       \min\{\rho_{\max}^2v_2(0),\rho_1(t,0-)v_2(0)\}-\rho_{\max}^2v_2(0), \quad &\text{if }r(t)=r_{\max}.
    \end{cases}
\end{align*}
As can be seen, the dynamics are more involved and strongly dependent on the buffer size.
Nevertheless, for $r(t)\in [0,r_{\max})$ the limit equations can be viewed as the supply chain model \eqref{eq:prodmodel} by assigning the artificial capacity $\mu_2=\rho_{\max}^2v_2(0)$ to the second road.
In particular, the capacity of the buffer is given by $\rho_{\max}^2v_2(0)$ and is equal to the capacity of the next processor as in \eqref{eq:prodmodel}.
Note that the dynamics change when $r(t)=r_{\max}$. 
Models with finite buffers are not studied in \cite{ArmbrusterDegondRinghofer2006,goettlich2010supplychains}.
\begin{example}
    Let us return to the setting \eqref{eq:example} for the second case.
    Now, the capacity of the first road is higher than that of the second one.
This allows for an increasing buffer and interesting dynamics:
\begin{enumerate}
        \item $\mu/v_2(0)>\rho_{\max}^2>\rho_1$: Here, the density is low enough not to affect the buffer, so the solution is simply the linear transport.
        \item $\mu/v_2(0)>\rho_1>\rho_{\max}^2$: The solution is first given by a linear transport equation. 
        When the density reaches the buffer at $t^*=-b/v_2(0)$, the buffer starts to fill at the rate:
        \[r'(t)=v_2(0)(\rho_1-\rho_{\max}^2).\]
        The inflow on the second road is then $\rho_{\max}^2v_2(0)$.
        This results in a density of $\rho_{\max}^2$.
     \item $\rho_1>\mu/v_2(0)>\rho_{\max}^2$: Instead of a linear transport, a rarefaction wave with intermediate state $\mu/v_2(0)$ and a shock is generated. Again, after some time (i.e. when the rarefaction wave reaches $x=0$), the buffer begins to fill with 
\[r'(t)=\mu-v_2(0)\rho_{\max}^2.\]
On road 2 the density is $\rho_{\max}^2$ as before.    
    \end{enumerate}
\end{example}

\subsection{Numerical examples}
In this subsection we consider some numerical results for the different cases discussed before.
For the numerical tests we use $\Delta x=0.01$.

We start with an explicit example for the initial condition \eqref{eq:example} and the parameters chosen such that $\rho_1>\mu/v_2(0)>\rho_{\max}^2$ holds.
More precisely, we choose the initial conditions
\begin{align*}
    \rho_{1,0}(x)=\begin{cases}
        1,\qquad &\text{if }x\in[-5,-1/3],\\
        0,&\text{ else}
    \end{cases}\qquad \rho_{2,0}(x)=0,\qquad r(0)=0,\ r_{\max}=\infty,
\end{align*}
$\mu=0.75$, $v_1(\rho)=1-\rho$, $v_2(\rho)=1-2\rho$ and hence $\rho_{\max}^2=0.5$.
The explicit solution for the limit model for $\eta \to \infty$ is for $t<1/3$ given by:
\begin{align*}
    \rho_{1}(t,x)=\begin{cases}
        0.75,\qquad &\text{if }x\in[-1/3,-1/3+t],\\
        1,\qquad &\text{if }x\in[-5+0.75t,-1/3],\\
        0,&\text{ else}
    \end{cases}\qquad \rho_{2,0}(x)=0,\qquad r(t)=0,
\end{align*}
and for $t\in[1/3,56/9)$ by
\begin{align*}
    \rho_{1}(t,x)&=\begin{cases}
        0.75,\qquad &\text{if }x\in[-1/3,0],\\
        1,\qquad &\text{if }x\in[-5+0.75t,-1/3],\\
        0,&\text{ else}
    \end{cases}\qquad \rho_{2,0}(x)=\begin{cases}
    0.5,\qquad &\text{if }x\in[0,t-1/3],\\
         0,&\text{ else}
    \end{cases},\\
    r(t)&=0.25(t-1/3).
\end{align*}
Note that the characteristics of the limit model, which emerge in the interval $[-5,-1/3]$, have an infinite slope, otherwise the slope is $v_2(0)$.\\
We compare the exact limit solution with a numerical approximation of the non-local buffer model computed with $\eta=2$ for a constant and linear decreasing kernel.
Figure \ref{fig:Infcomparison} shows also the possible limit model for $\eta \to 0$, i.e., \eqref{eq:model0}.
It can be seen that for the different models induced by $\eta$ more mass seems to be transported on the first road, into the buffer and trough the junction with increasing $\eta$.
\setlength{\fwidth}{0.8\textwidth}
\begin{figure}
    \centering
	\input{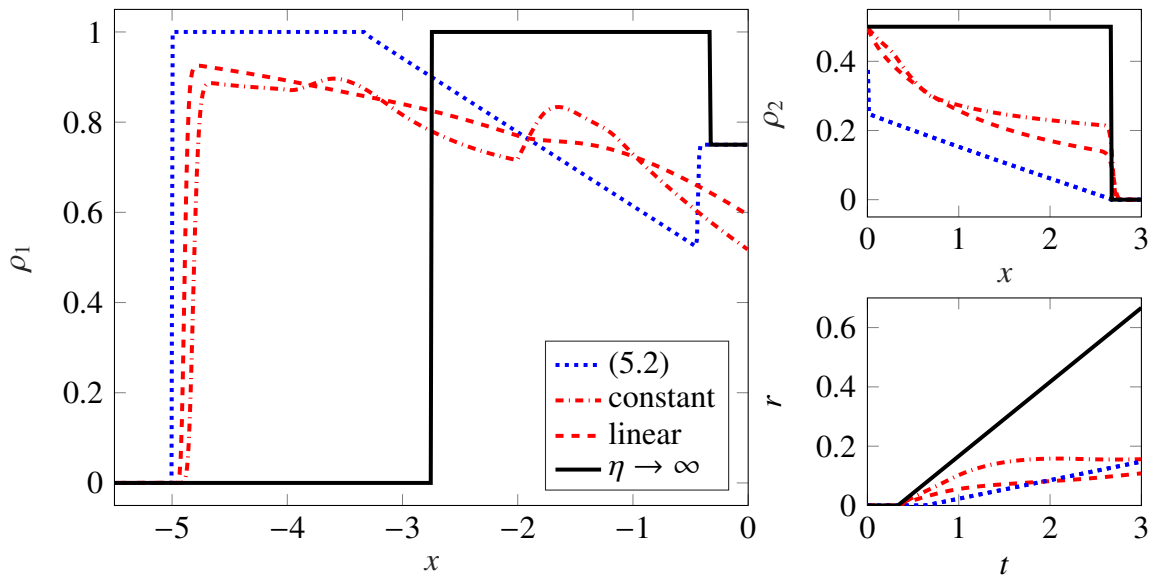}
    \caption{Approximate solution of the non-local buffer model with a constant and linear kernel for $\eta=2$ with the limit models to zero and to infinity at $T=3$: approximate solutions of road 1, left, of road 2, top right and the evolution of the buffer,  bottom right.}
    \label{fig:Infcomparison}
\end{figure}

The characteristics \eqref{eq:char_1}--\eqref{eq:char_2} for $\eta=2$ are shown in Figure \ref{fig:chareta2}. They are approximated by an explicit Euler scheme and the numerical solutions of $\rho_1$ and $\rho_2$. As it can be seen the characteristics are not crossing such that existence and uniqueness of weak solutions might be provable as discussed in Remark \ref{rem:completemodel}. Moreover, some properties of the solutions can be observed such as the increase in the density at $x\approx-5$.
\setlength{\fwidth}{0.8\textwidth}
\begin{figure}
    \centering
	\input{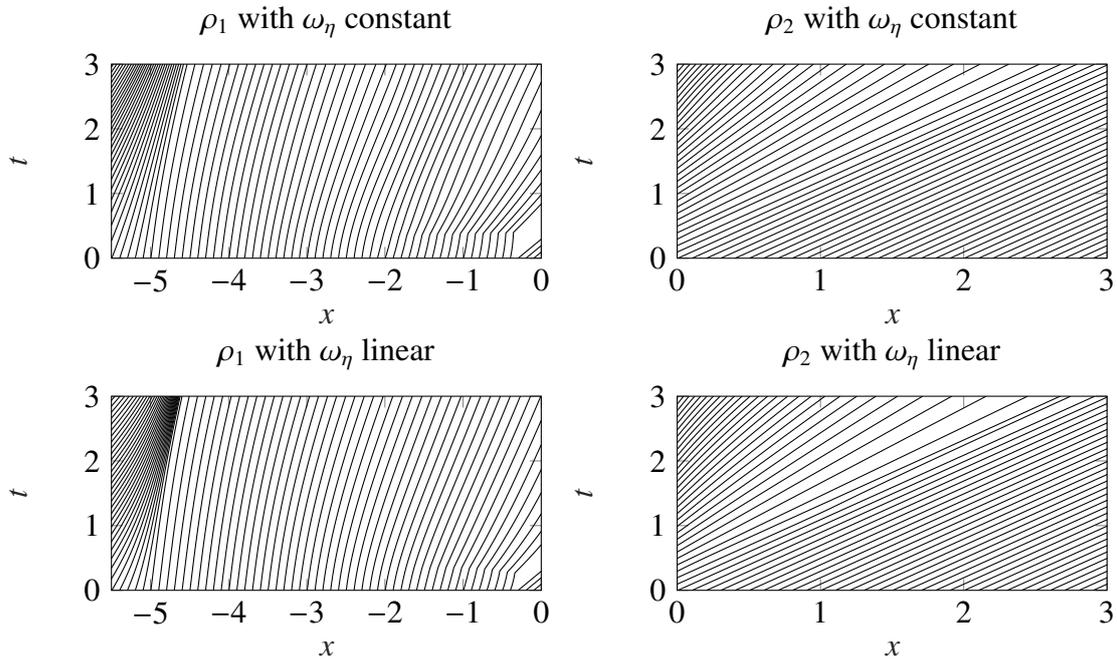}
    \caption{Approximate solutions of the characteristics for the non-local model \eqref{eq:model} with $\eta=2$: approximate solutions on road 1, left column, on road 2, right column with the constant kernel, top row, and a linear decreasing kernel,  bottom row.}
    \label{fig:chareta2}
\end{figure}

In the following, we again restrict ourselves to a linear decreasing kernel.
The limit for $\eta \to \infty$ is shown in Figure \ref{fig:Bufferlim}.
It can be seen that in particular the approximation of the shock and the rarefaction wave of the exact solution for $\eta\to\infty$ requires rather large values of $\eta$. 
Further, with increasing $\eta$ more mass is transported on the first and second road.
\setlength{\fwidth}{0.8\textwidth}
\begin{figure}
    \centering
	\input{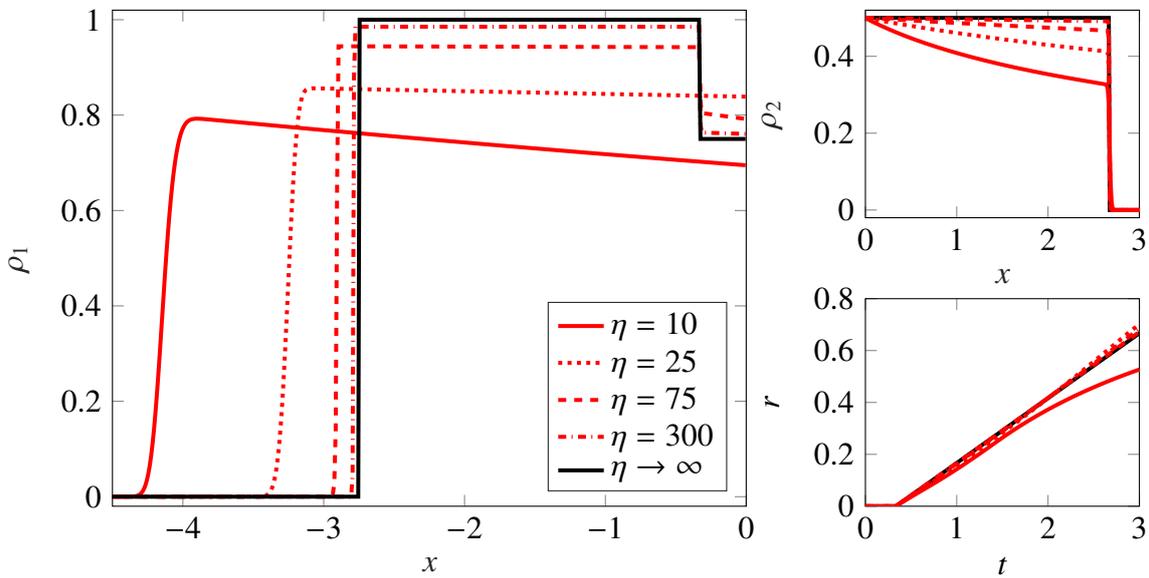}
    \caption{Approximate solution of the non-local buffer model with increasing values of $\eta$ at $T=3$: approximate solutions of road 1, left, of road 2, top right and the evolution of the buffer,  bottom right.}
    \label{fig:Bufferlim}
\end{figure}

Interestingly, convergence can also be obtained for the characteristics \eqref{eq:char_1}--\eqref{eq:char_2}. Using \eqref{eq:velconv}--\eqref{eq:inflowlimit} they converge at least formally to
\begin{align*}
    \xi^1[t,x](\tau)=&x+\int_t^\tau \begin{cases}
        v_2(0)&\text{if } \rho_1(t,\xi^1[t,x](s))v_2(0)\leq \mu\\
        0&\text{else}
    \end{cases}ds,\quad
    \xi^2[t,x](\tau)=&x+(\tau-t)v_2(0).
\end{align*}
This can be seen in Figure \ref{fig:chareta300}, too.
In contrast, to the local case the characteristics do not seem to cross, but they change their slope as soon as the shock reaches them.
Note that this observation is different to the limit $\eta\to 0$ in which a convergence or even a similarity of the characteristics can not be obtained (as in the considered non-local model characteristics always have a positive slope in contrast to the local models \eqref{eq:model0herty} abd \eqref{eq:model0}).
\begin{figure}
    \centering
    \input{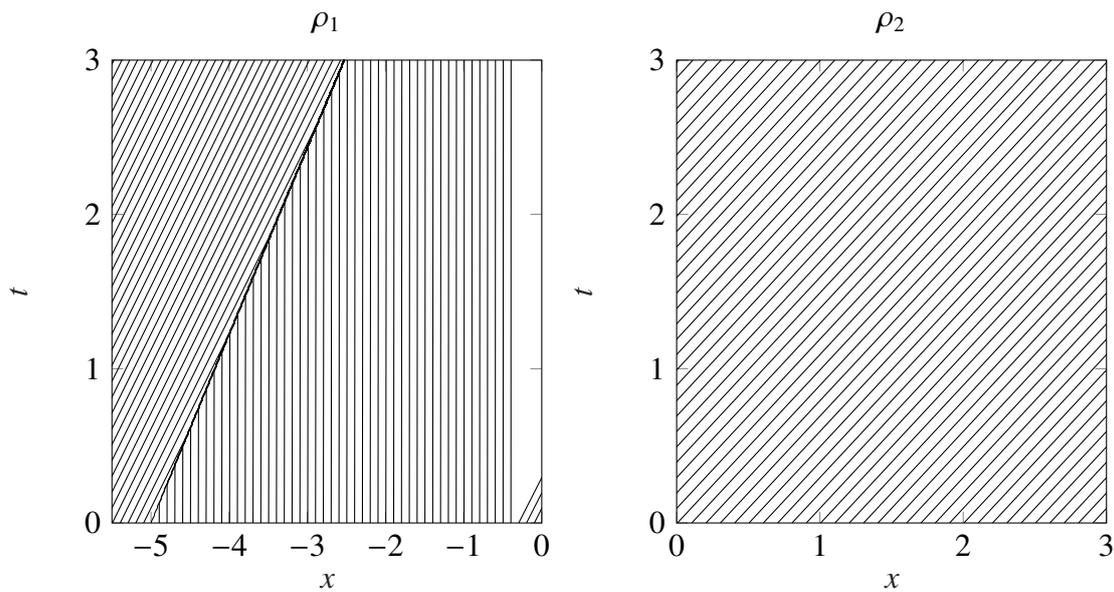}
    \caption{Approximate solutions of the characteristics for the non-local model \eqref{eq:model} with $\eta=300$.}
    \label{fig:chareta300}
\end{figure}

During the discussion of the limit models in Section \ref{subsec:limitinftymodel}, we also encountered specific cases of the production model \eqref{eq:prodmodel} that have not yet been studied in the literature.
Therefore, we consider slight modifications of the initial setup in \eqref{eq:example}.
In Figure \ref{fig:3cunlimited}, we consider the same situation as above but with an unlimited and a limited buffer of $r_{\max}=0.15$ for $\eta=200$ at $T=2$.
In the latter case, the buffer increases until about $T=1$.
Before $T\approx 1$, the solution is the same as discussed above.
Afterwards the dynamics with a limited buffer change, which can be seen in the different speeds for the shock on the first road.
For a full buffer, the speed drops from $0.75$ to $0.5$.
On the second road the solutions for both models coincide.
\setlength{\fwidth}{0.8\textwidth}
 \begin{figure}
    \centering
%
%
\definecolor{mycolor1}{rgb}{0.00000,0.44700,0.74100}%
\definecolor{mycolor2}{rgb}{0.85000,0.32500,0.09800}%
\begin{tikzpicture}

\begin{axis}[%
width=0.606\fwidth,
height=0.478\fwidth,
at={(0\fwidth,0\fwidth)},
scale only axis,
xmin=197,
xmax=201,
xtick={197,198,199,200,201},
xticklabels={-4,-3,-2,-1,0},
xlabel style={font=\color{white!15!black}},
xlabel={$x$},
ymin=-0.02,
ymax=1.02,
ylabel style={font=\color{white!15!black}},
ylabel={$\rho_1$},
axis background/.style={fill=white},
legend style={at={(0.26,0.05)}, anchor=south west, legend cell align=left, align=left, draw=white!15!black}
]
\addplot [color=black, line width=1.5pt]
  table[row sep=crcr]{%
196.995	1\\
200.665	1\\
200.675	0\\
200.995	0\\
};
\addlegendentry{Initial data}

\addplot [color=red, dashed, line width=1.5pt]
  table[row sep=crcr]{%
196.995	0\\
197.165	3.94134437442517e-06\\
197.175	0.00140865741704488\\
197.195	0.98798245372069\\
197.235	0.988030719098447\\
200.665	0.987823412540791\\
200.675	0.787439958868845\\
200.685	0.753394699938212\\
200.695	0.751135610934057\\
200.725	0.750659188844111\\
200.995	0.747095737623084\\
};
\addlegendentry{\eqref{eq:model} $r_{\max}=0.15$}

\addplot [color=blue, dotted, line width=1.5pt]
  table[row sep=crcr]{%
196.995	0\\
197.425	0.000105890264080699\\
197.435	0.00328150928399396\\
197.445	0.0740584217030573\\
197.455	0.93942931567787\\
197.465	0.985397064688925\\
200.665	0.985161705963321\\
200.675	0.784398417314748\\
200.685	0.762480071342139\\
200.995	0.758299248865143\\
};
\addlegendentry{\eqref{eq:model} $r_{\max}=\infty$}

\end{axis}

\begin{axis}[%
width=0.262\fwidth,
height=0.2\fwidth,
at={(0.72\fwidth,0.278\fwidth)},
scale only axis,
xmin=0,
xmax=2,
xlabel style={font=\color{white!15!black}},
xlabel={$x$},
ymin=0,
ymax=0.6,
ylabel style={font=\color{white!15!black}},
ylabel={$\rho_2$},
axis background/.style={fill=white}
]
\addplot [color=black, line width=1.5pt,forget plot]
  table[row sep=crcr]{%
0.00499999999999989	0\\
2.005	0\\
};
\addplot [color=blue, dotted, line width=1.5pt,forget plot]
  table[row sep=crcr]{%
0.00499999999999989	0.499949593951516\\
1.495	0.492605316818001\\
1.655	0.491770615867621\\
1.665	0.484049854221689\\
1.675	0.0203010764357581\\
1.685	0.000422880448655238\\
1.695	5.79353774909919e-06\\
1.985	0\\
2.005	0\\
};
\addplot [color=red, dashed, line width=1.5pt,forget plot]
  table[row sep=crcr]{%
0.00499999999999989	0.499949593951516\\
1.495	0.492605316818001\\
1.655	0.491770405324988\\
1.665	0.484037264897154\\
1.675	0.0203133310339845\\
1.685	0.000423415328605792\\
1.695	5.80479269718381e-06\\
1.985	0\\
2.005	0\\
};
\end{axis}

\begin{axis}[%
width=0.262\fwidth,
height=0.2\fwidth,
at={(0.72\fwidth,0\fwidth)},
scale only axis,
xmin=0,
xmax=2,
ymin=0,
ymax=0.45,
ylabel = {$r$},
xlabel style={font=\color{white!15!black}},
xlabel={$t$},
axis background/.style={fill=white},
legend style={legend cell align=left, align=left, draw=white!15!black}
]
\addplot [color=blue, dotted, line width=1.5pt,forget plot]
  table[row sep=crcr]{%
0	0\\
0.33	4.89226199551496e-07\\
0.36	0.00741631196994064\\
0.61	0.0693785294943186\\
0.85	0.128855409055357\\
1.03	0.173644073665565\\
1.21	0.218592412578916\\
1.39	0.263699979871921\\
1.57	0.308966331777932\\
1.75	0.354391026217665\\
1.93	0.399973622783676\\
2	0.417742737028953\\
};

\addplot [color=red, dashed, line width=1.5pt,forget plot]
  table[row sep=crcr]{%
0	0\\
0.33	4.89226199551496e-07\\
0.34	0.00245292673317099\\
0.49	0.0396558216556113\\
0.64	0.0768034997246443\\
0.76	0.106521095505494\\
0.86	0.131339471561346\\
0.94	0.151229259802698\\
2	0.151229259802698\\
};

\end{axis}

\end{tikzpicture}%
    \caption{Approximate solution of the non-local buffer model at $T=2$ with a finite and infinite buffer: approximate solutions of road 1, left, of road 2, top right and the evolution of the buffer,  bottom right.}
    \label{fig:3cunlimited}
\end{figure}
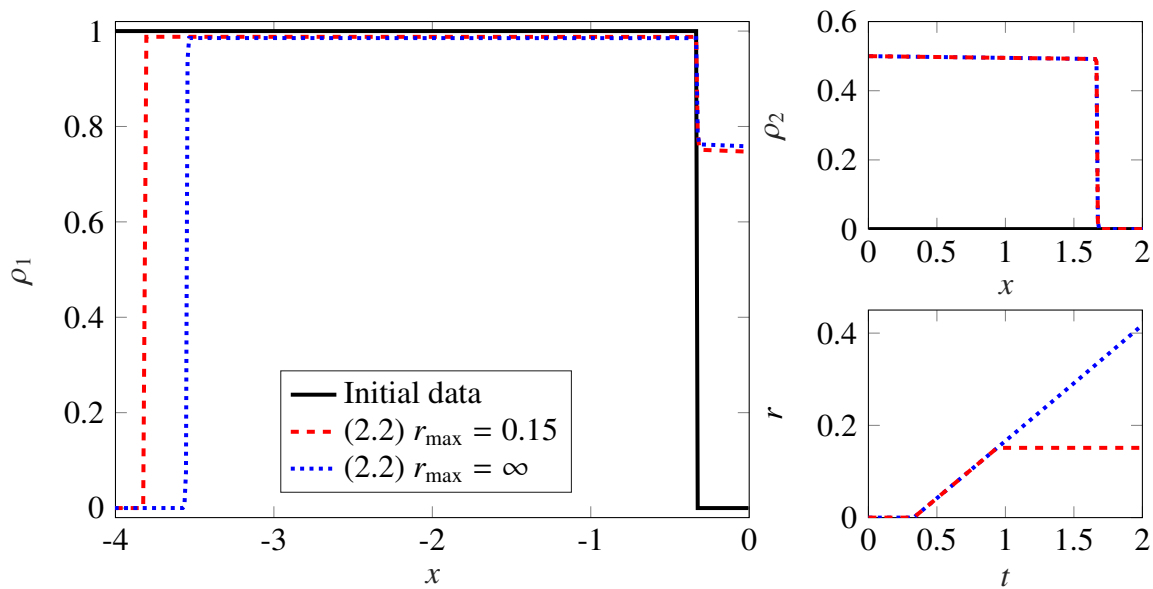


\section{Conclusion}
We presented a non-local buffer model for a 1-to-1 junction with a suitable numerical discretisation.
We have proved that the numerical scheme preserves the maximum principle uniform in $\eta$.
Further, we investigate the model hierarchies induced by the non-local term.
For $\eta\to 0$, we can exclude the convergence to the local model of \cite{herty2009novel} which originally inspired the non-local buffer model.
We propose a different local model which seems to be (numerically) the correct limit of the non-local buffer model.
Moreover, we investigate the limit of $\eta\to \infty$.
Here, drivers drive with constant speed up to a certain capacity.
This shares similarities to supply chain models with two processors and one buffer.
Nevertheless, the obtained limit dynamics are more general than the supply chain models studied in the literature so far.
Again the convergence can be obtained numerically.

Future work could include proofing the existence and uniqueness of weak solutions as hinted in Remark \ref{rem:completemodel} for the complete model \eqref{eq:model}, rigorously proofing the limits of the non-local buffer model and extending the junction model to more complex junctions and network structures.

\section*{Acknowledgement}
F.A.C. is member of Gruppo Nazionale per l’Analisi Matematica, la Probabilit`a e le loro Applicazioni (GNAMPA) of the Istituto Nazionale di Alta Matematica (INdAM). F.A.C. was partially supported by the INdAM-GNAMPA project CUP\_E53C22001930001.
Jan Friedrich is supported by the German Research Foundation (DFG) through SPP 2410 'Hyperbolic Balance Laws in Fluid Mechanics: Complexity, Scales, Randomness' under grant FR 4850/1-1, and Simone G\"ottlich under grant GO 1920/12-1.

\end{document}